\theoremstyle{definition}
\newtheorem{theorem}{Theorem}
\theoremstyle{definition}
\newtheorem{lemma}[theorem]{Lemma}
\theoremstyle{definition}
\newtheorem{corollary}[theorem]{Corollary}
\theoremstyle{definition}
\newtheorem{proposition}[theorem]{Proposition}
\theoremstyle{definition}
\theoremstyle{definition}
\newtheorem{remark}[theorem]{Remark}
\theoremstyle{definition}
\newtheorem{example}[theorem]{Example}
\numberwithin{equation}{section}
\numberwithin{equation}{section}
\DeclareMathOperator{\supp}{supp}
\begin{document}
\title{Isometric copies of $l^\infty$ in Ces\`aro-Orlicz function spaces}
\author{Tomasz Kiwerski, Pawe\l {} Kolwicz}

\begin{center}
\textbf{{\large {Isometric copies of $l^{\infty }$ in Cesàro-Orlicz\\[0pt]
function spaces}}}
\end{center}

\bigskip

\begin{center}
Tomasz Kiwerski

Faculty of Mathematics, Computer Science and Econometrics\\[0pt]
University of Zielona Góra, \\[0pt]
prof. Z. Szafrana 4a, 65-516 Zielona Góra, Poland\\[0pt]
e-mail address: \verb|tomasz.kiwerski@gmail.com|
\end{center}

\bigskip

\begin{center}
Pawe\l\ Kolwicz

Institute of Mathematics,\\[0pt]
Faculty of Electrical Engineering,\\[0pt]
Pozna\'{n} University of Technology,\\[0pt]
Piotrowo 3A, 60-965 Pozna\'{n}, Poland\\[0pt]
e-mail address: \verb|pawel.kolwicz@put.poznan.pl|
\end{center}

\bigskip \noindent ABSTRACT. We characterize Cesàro-Orlicz function spaces $%
Ces_{\varphi }$ containing order isomorphically isometric copy of $l^{\infty
}$. We discuss also some useful applicable conditions sufficient for the
existence of such a copy.

\bigskip

\begin{flushleft}
2010 Mathematics Subject Classification. 46A80, 46B04, 46B20, 46B42, 46E30.

Key words and phrases. Cesàro-Orlicz function space; isometric copy of $%
l^{\infty }$; order continuous norm.
\end{flushleft}

\bigskip

\section{INTRODUCTION}

The structure of different types of Cesàro spaces has been widely
investigated during the last decades from the isomorphic as well as
isometric point of view. The spaces generated by the Cesàro operator
(including abstract Cesàro spaces) have been considered by Curbera, Ricker
and Le\'{s}nik, Maligranda in several papers (see \cite{Cur-Ric}, \cite%
{Cur-Ric3}, \cite{Cur-Ric4}, \cite{Cur-Ric6}, \cite{LM15a}, \cite{LM15b}, 
\cite{LM15p}). The classical Cesàro sequence $ces_{p}$ and function $Ces_{p}$
spaces have been studied by many authors (see \cite{AM09}, \cite{AM14} -
also for further references, \cite{AM2008}, \cite{AM2015}, \cite{CJP97}, 
\cite{CMP00}). It has been proved among others that some properties are
fulfilled in the sequence case and are not in function case. Moreover,
sometimes the cases $Ces_{p}[0,1]$ and $Ces_{p}[0,\infty )$ are essentially
different (see an isomorphic description of the Köthe dual of Cesàro spaces
in \cite{AM09} and \cite{LM15a}).

The Cesàro-Orlicz sequence spaces denoted by $ces_{\varphi }$ are
generalization of the Cesàro sequence spaces $ces_{p}$. Of course, the
structure of the spaces $ces_{\varphi }$ is richer than of the space $%
ces_{p} $. The spaces $ces_{\varphi }$ have been studied intensively (see 
\cite{CHPSSz05}, \cite{KK10}, \cite{Ku09} and \cite{MPS}). We want to
investigate the Cesàro-Orlicz function space $Ces_{\varphi }(I)$. The spaces 
$Ces_{\varphi }$ which contain an order isomorphic copy of $l^{\infty }$
(equivalently, are not order continuous) have been characterized in \cite%
{KK16}. The monotonicity properties have been also considered in \cite{KK16}%
. In this paper we want to describe Cesàro-Orlicz function spaces $%
Ces_{\varphi }$ containing order isomorphically isometric copy of $l^{\infty
}$. We discuss also some useful applicable conditions sufficient for the
existence of such a copy$.$ We admit the largest possible class of Orlicz
functions giving the maximal generality of spaces under consideration.

\section{PRELIMINARIES}

Let $\mathbb{R}$, $\mathbb{R_{+}}$ and $\mathbb{N}$ be the sets of real,
nonnegative real and natural numbers, respectively. Denote by $\mu $ the
Lebesgue measure on $I$ and by $L^{0}=L^{0}(I)$ the space of all classes of
real-valued Lebesgue measurable functions defined on $I$, where $I=[0,1]$ or 
$I=[0,\infty )$.

A Banach space $E=(E,\Vert \cdot \Vert )$ is said to be a Banach ideal space
on $I$ if $E$ is a linear subspace of $L^{0}(I)$ and satisfies two
conditions:

\begin{enumerate}
\item if $g\in E$, $f\in L^{0}$ and $|f|\leq |g|$ a.e. on $I$ then $f\in E$
and $\Vert f\Vert \leq \Vert g\Vert $,

\item there is an element $f\in E$ that is positive on whole $I $.
\end{enumerate}

\noindent Sometimes we write $\left\| \cdot\right\|_{E}$ to be sure in which
space the norm has been taken.

For two Banach ideal spaces $E$ and $F$ on $I$ the symbol $E \hookrightarrow
F$ means that the embedding $E\subset F$ is continuous, i.e., there exists
constant a $C>0$ such that $\left\| x\right\|_{F} \le C\left\| x\right\|_{E}$
for all $x\in E$. Moreover, $E=F$ means that the spaces are the same as the
sets and the norms are equivalent.

A Banach ideal space is called order continuous ($E\in \text{(OC)}$ shortly)
if every element of $E$ is order continuous, that is, for each $f\in E$ and
for each sequence $(f_{n})\subset E$ satisfying $0\leq f_{n}\leq |f|$ and $%
f_{n}\rightarrow 0$ a.e. on $I$, we have $\left\Vert f_{n}\right\Vert
\rightarrow 0$. By $E_{a}$ we denote the subspace of all order continuous
elements of $E$. It is worth to notice that in case of Banach ideal spaces
on $I$, $x\in E_{a}$ if and only if $\left\Vert x\chi _{A_{n}}\right\Vert
\rightarrow 0$ for any decreasing sequence of Lebesgue measurable sets $%
A_{n}\subset I$ with empty intersection (see \cite[Proposition 3.5, p. 15]%
{BS88}).

A function $\varphi :[0,\infty )\rightarrow \lbrack 0,\infty ]$ is called an
Orlicz function if:

\begin{enumerate}
\item $\varphi$ is convex,

\item $\varphi (0)=0$,

\item $\varphi $ is neither identically equal to zero nor infinity on $%
(0,\infty )$,

\item $\varphi $ is left continuous on $(0,\infty )$, i.e., $%
\lim_{u\rightarrow b_{\varphi }^{-}}\varphi (u)=\varphi (b_{\varphi })$ if $%
b_{\varphi }<\infty $, where 
\begin{equation*}
b_{\varphi }=\sup \{u>0:\varphi (u)<\infty \}.
\end{equation*}
\end{enumerate}

For more information about Orlicz functions see \cite{Ch96} and \cite{KR61}.

If we denote 
\begin{equation*}
a_{\varphi }=\sup \{u\geq 0:\varphi (u)=0\},
\end{equation*}%
then $0\leq a_{\varphi }\leq b_{\varphi }\leq \infty $. Moreover, $%
a_{\varphi }<\infty $ and $b_{\varphi }>0$, since an Orlicz function is
neither identically equal to zero nor infinity on $(0,\infty )$. The
function $\varphi $ is continuous and non-decreasing on $[0,b_{\varphi })$
and is strictly increasing on $[a_{\varphi },b_{\varphi })$. We use
notations $\varphi >0$, $\varphi <\infty $ when $a_{\varphi }=0$, $%
b_{\varphi }=\infty $, respectively.

We say an Orlicz function $\varphi $ satisfies the condition $\Delta _{2}$
for large arguments ($\varphi \in \Delta _{2}(\infty )$ for short) if there
exists $K>0$ and $u_{0}>0$ such that $\varphi (u_{0})<\infty $ and 
\begin{equation*}
\varphi (2u)\leq K\varphi (u)
\end{equation*}%
for all $u\in \lbrack u_{0},\infty )$. Similarly, we can define the
condition $\Delta _{2}$ for small, with $\varphi (u_{0})>0$ $(\varphi \in
\Delta _{2}(0))$ or for all arguments $(\varphi \in \Delta _{2}(\mathbb{R_{+}%
}))$. These conditions play a crucial role in the theory of Orlicz spaces,
see \cite{Ch96}, \cite{KR61}, \cite{Ma89} and \cite{Mu83}. We will write $%
\varphi \in \Delta _{2}$ in two cases: $\varphi \in \Delta _{2}(\infty )$ if 
$I=[0,1]$ and $\varphi \in \Delta _{2}(\mathbb{R_{+}})$ if $I=[0,\infty )$.

The Orlicz function space $L^{\varphi }=L^{\varphi }(I)$ generated by an
Orlicz function $\varphi $ is defined by 
\begin{equation*}
L^{\varphi }=\{f\in L^{0}(I):I_{\varphi }(f/\lambda )<\infty \ \text{for some%
}\ \lambda =\lambda (f)>0\},
\end{equation*}%
where $I_{\varphi }(f)=\int_{I}\varphi (|f(t)|)dt$ is a convex modular (for
the theory of Orlicz spaces and modular spaces see \cite{Ma89} and \cite%
{Mu83}). The space $L^{\varphi }$ is a Banach ideal space with the
Luxemburg-Nakano norm 
\begin{equation*}
\left\Vert f\right\Vert _{\varphi }=\inf \{\lambda >0:I_{\varphi }(f/\lambda
)\leq 1\}.
\end{equation*}%
It is well known that $\left\Vert f\right\Vert _{\varphi }\leq 1$ if and
only if $I_{\varphi }(f)\leq 1$. Moreover, the set 
\begin{equation}
KL^{\varphi }=KL^{\varphi }(I)=\{f\in L^{0}(I):I_{\varphi }(f)<\infty \},
\label{def: klasa orlicza}
\end{equation}%
will be called the Orlicz class.

The Cesàro operator $C:L^{0}(I)\rightarrow L^{0}(I)$ is defined by 
\begin{equation*}
Cf(x)=\frac{1}{x}\int_{0}^{x}f(t)dt,
\end{equation*}%
for $0<x\in I$. For a Banach ideal space $X$ on $I$ we define an abstract Ces%
àro space $CX=CX(I)$ by 
\begin{equation*}
CX=\{f\in L^{0}(I):C|f|\in X\}
\end{equation*}%
with the norm $\left\Vert f\right\Vert _{CX}=\left\Vert C|f|\right\Vert _{X}$
(see \cite{LM15a}, \cite{LM15b}, \cite{LM15p}).

The Cesàro-Orlicz function space $Ces_{\varphi }=Ces_{\varphi }(I)$ is
defined by $Ces_{\varphi }(I)=CL^{\varphi }(I)$. Consequently, the norm in
the space $Ces_{\varphi }$ is given by the formula 
\begin{equation*}
\left\Vert f\right\Vert _{Ces(\varphi )}=\inf \{\lambda >0:\rho _{\varphi
}(f/\lambda )\leq 1\},
\end{equation*}%
where $\rho _{\varphi }(f)=I_{\varphi }(C|f|)$ is a convex modular. We
always assume that $Ces_{\varphi }\neq \{0\}$. If $I=[0,\infty )$ then $%
Ces_{\varphi }[0,\infty )\neq \{0\}$ if and only if the function $%
x\rightarrow \frac{1}{x}\chi _{\lbrack a,\infty )}(x),x>0$ belongs to $%
L^{\varphi }[0,\infty )$ for some $a>0$ (see Theorem 1 (a) in \cite{LM15a}
and Proposition 3 in \cite{KK16}). However, $Ces_{\varphi }[0,1]\neq \{0\}$
for any Orlicz function $\varphi .$ Indeed, $L^{\varphi }[0,1]$ is symmetric
and $Ces_{\varphi }[0,1]\neq \{0\}$ if and only if $\chi _{\lbrack a,1]}\in
L^{\varphi }[0,1]$ for some $0<a<1$ (see Theorem 1 (b) in \cite{LM15a}).

In this paper we accept the convention that $\sum_{n=m}^{k}x_{n}=0$ if $k<m$.

Note that if $0<a_{\varphi }=b_{\varphi }$ then $L^{\varphi }=L^{\infty }$
and $\left\Vert x\right\Vert _{\varphi }=\frac{1}{b_{\varphi }}\left\Vert
x\right\Vert _{\infty }$, see Example 1 in \cite[\text{p.} 98]{Ma89}.
Consequently, $Ces_{\varphi }=Ces_{\infty }$ in that case (see \cite{AM09}, 
\cite{LM15a}). Therefore we can assume that if $b_{\varphi }<\infty $, then $%
a_{\varphi }<b_{\varphi }$.

\section{Isometric copies of $l^{\infty }$ in $Ces_{\protect\varphi }$}

\noindent Define a set 
\begin{equation*}
C_{\varphi }=C_{\varphi }(I)=\{x\in Ces_{\varphi }(I):\rho _{\varphi
}(kx)<\infty \ \text{for all}\ k>0\}.
\end{equation*}%
Note that $C_{\varphi }=\{0\}$ whenever $b_{\varphi }<\infty .$ If $%
b_{\varphi }=\infty ,$ then $\left( Ces_{\varphi }\right) _{a}=C_{\varphi }$
by Theorem 5 from \cite{KK16} (recall that $\left( Ces_{\varphi }\right)
_{a} $ is the subspace of all order continuous elements in $Ces_{\varphi }$).

We say that a measurable set $\Omega $ is a support of a Banach ideal space $%
E$, we write $\limfunc{supp}E=\Omega $ whenever\newline
$\left( i\right) $ for each $x\in E$ there is a measurable set $A$ with $\mu
\left( A\right) =0$ and $\limfunc{supp}x\subset A\cup \Omega .$\newline
$\left( ii\right) $ there is $x\in E$ such that $\mu \left( \Omega
\backslash \limfunc{supp}x\right) =0.$

\begin{lemma}
\label{supp Ces_fi_a}Suppose $b_{\varphi }<\infty .$ Then $\left( L^{\varphi
}\right) _{a}=\left\{ 0\right\} $ and $\limfunc{supp}\left( Ces_{\varphi
}\right) _{a}=I.$ Moreover, if $0\leq x\in \left( Ces_{\varphi }\right)
_{a}, $ then $\lim_{t\rightarrow 0^{+}}Cx\left( t\right) =0.$
\end{lemma}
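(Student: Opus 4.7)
The plan is to treat the three assertions in order, collecting the quick one first and saving the longer support argument for the middle.

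For $(L^\varphi)_a = \{0\}$, I would invoke the classical description of the order-continuous part in a Luxemburg-norm Orlicz space, namely $(L^\varphi)_a = \{f \in L^\varphi : I_\varphi(\lambda f) < \infty \text{ for every } \lambda > 0\}$. Under $b_\varphi < \infty$, membership in this set forces $\lambda |f| \leq b_\varphi$ a.e.\ for every rational $\lambda > 0$, so letting $\lambda \to \infty$ yields $f = 0$ a.e.

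The heart of the lemma is the support statement. The key step is to establish $\chi_{[a,b]} \in (Ces_\varphi)_a$ for every $0 < a < b < \infty$ (with $b \leq 1$ if $I = [0,1]$). Let $A_n \downarrow \emptyset$ and $B_n := A_n \cap [a,b]$; dominated convergence gives $\mu(B_n) \to 0$, and $C\chi_{B_n}(t) \leq (\mu(B_n)/t)\,\chi_{[a,\infty)}(t)$, hence $\|C\chi_{B_n}\|_\infty \leq \mu(B_n)/a \to 0$. On $I = [0,1]$ this already forces $\|C\chi_{B_n}\|_\varphi \to 0$ via the embedding $L^\infty[0,1] \hookrightarrow L^\varphi[0,1]$. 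On $I = [0,\infty)$ the $L^\infty$-control is not enough; I would estimate the modular directly, and after the substitution $u = \lambda\mu(B_n)/t$,
\[
I_\varphi(\lambda C\chi_{B_n}) \;\leq\; \int_a^\infty \varphi\!\left(\tfrac{\lambda\mu(B_n)}{t}\right)\,dt \;=\; \lambda\mu(B_n) \int_0^{\lambda\mu(B_n)/a} \frac{\varphi(u)}{u^2}\,du,
\]
which tends to zero for every fixed $\lambda > 0$, because the hypothesis $Ces_\varphi[0,\infty) \neq \{0\}$ is equivalent to $\int_0^\delta \varphi(u)/u^2\,du < \infty$ for some $\delta > 0$. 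To produce a single full-support witness, pick intervals $[a_k,b_k]$ exhausting $I$ away from $0$ and weights $c_k > 0$ with $\sum_k c_k \|\chi_{[a_k,b_k]}\|_{Ces_\varphi} < \infty$; then $y := \sum_k c_k \chi_{[a_k,b_k]}$ lies in $(Ces_\varphi)_a$ by norm-closedness and satisfies $\supp y = I$ up to a null set.

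For the final sentence, suppose toward contradiction that $0 \leq x \in (Ces_\varphi)_a$ but $Cx(t) \not\to 0$ as $t \to 0^+$. Then there exist $c > 0$ and $t_n \downarrow 0$ with $Cx(t_n) \geq c$; set $A_n := (0, t_n]$. A direct calculation shows $C(x\chi_{A_n})(t) = (t_n/t)\,Cx(t_n) \geq c/2$ for every $t \in [t_n, 2t_n]$, so $\|C(x\chi_{A_n})\|_\infty \geq c/2$. Combined with the basic inequality $\|f\|_\varphi \geq \|f\|_\infty/b_\varphi$, valid whenever $b_\varphi < \infty$ because $I_\varphi(f/\lambda) < \infty$ forces $|f| \leq \lambda b_\varphi$ a.e., this yields $\|x\chi_{A_n}\|_{Ces_\varphi} \geq c/(2 b_\varphi) > 0$ for every $n$, contradicting $x \in (Ces_\varphi)_a$. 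The main obstacle I anticipate is the support statement on the half-line: the naive $L^\infty$-bound from $[0,1]$ is unavailable, and one must reduce the modular to the $\int \varphi(u)/u^2\,du$ estimate furnished by the nontriviality of $Ces_\varphi[0,\infty)$.
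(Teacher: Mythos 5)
Your proof is correct and follows essentially the same route as the paper: the jump of $\varphi$ beyond $b_{\varphi}$ kills order continuity in $L^{\varphi}$; characteristic functions of intervals bounded away from $0$ lie in $\left( Ces_{\varphi }\right) _{a}$ because $C\chi _{B_{n}}\rightarrow 0$ uniformly, with the half-line case handled by the nontriviality condition $\int_{0}^{\delta }\varphi (u)u^{-2}du<\infty $ (this is exactly the content of the paper's appeal to Proposition 3 of \cite{KK16}); and if $Cx$ does not vanish at $0^{+}$, the restrictions $x\chi _{(0,t_{n}]}$ have $Ces_{\varphi }$-norms bounded below. The only deviations are cosmetic --- citing $\left( L^{\varphi }\right) _{a}=\{f:I_{\varphi }(\lambda f)<\infty \text{ for all }\lambda \}$ rather than the paper's direct argument with $\alpha \chi _{A}$, using $\left\Vert f\right\Vert _{\varphi }\geq \left\Vert f\right\Vert _{\infty }/b_{\varphi }$ in place of showing the modular is infinite, and spelling out the full-support witness $\sum_{k}c_{k}\chi _{\lbrack a_{k},b_{k}]}$ that the paper leaves implicit.
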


\begin{proof}
The equality $\left( L^{\varphi }\right) _{a}=\left\{ 0\right\} $ is well
known, it is \ enough to consider the element $x=\alpha \chi _{A}$ for $%
\alpha >0$ and $0<\mu \left( A\right) <\infty .$ Taking a sequence $\left(
A_{n}\right) $ of measurable subsets of $A$ with $\mu \left( A_{n}\right)
\rightarrow 0$ we conclude that $\left\Vert x\chi _{A_{n}}\right\Vert
_{\varphi }\nrightarrow 0$ because $I_{\varphi }\left( \lambda x\chi
_{A_{n}}\right) =\infty $ for $\lambda >b_{\varphi }/\alpha $ and each $n.$
We will prove that $\limfunc{supp}\left( Ces_{\varphi }\right) _{a}=I.$ Let $%
x=\chi _{\left( a,b\right) }$ for any $0<a<b<m\left( I\right) .$ Take a
decreasing sequence of Lebesgue measurable sets $A_{n}\subset I$ with empty
intersection. Then $\mu \left( A_{n}\cap \left( a,b\right) \right)
\rightarrow 0.$ Set $B_{n}=A_{n}\cap \left( a,b\right) $ and $x_{n}=x\chi
_{B_{n}}$ Then $Cx_{n}\rightarrow 0$ uniformly. Consequently, $\rho
_{\varphi }\left( \lambda x_{n}\right) =I_{\varphi }\left( \lambda
Cx_{n}\right) \rightarrow 0$ for each $\lambda >0,$ which for $I=\left[ 0,1%
\right] $ follows directly and for $I=[0,\infty )$ we need Proposition 3
from \cite{KK16}. \ Consequently,\ $\left\Vert x_{n}\right\Vert
_{Ces(\varphi )}\rightarrow 0.$ Thus $x\in \left( Ces_{\varphi }\right) _{a}$
which gives $\limfunc{supp}\left( Ces_{\varphi }\right) _{a}=I.$\newline
Finally, suppose $\limsup_{t\rightarrow 0^{+}}Cx\left( t\right) =\delta >0.$
Then there is a number $\gamma >0$ and a sequence $\left( t_{k}\right)
_{k=1}^{\infty }$ such that $t_{k}\rightarrow 0^{+}$ and $Cx\left(
t_{k}\right) \geq \delta /2$ for each $k.$ Since the function $Cx$ is
continuous on the interval $\left( 0,m\left( I\right) \right) ,$ for each $k$
there is a measurable set $B_{k}$ of positive measure such that 
\begin{equation*}
Cx\left( t\right) \geq \delta /4
\end{equation*}%
for each $t\in B_{k}.$ Let $A_{n}=\left( 0,\frac{1}{n}\right) $ for $n\in 
\mathbb{N}.$ Consequently, for each $n$ we find a number $k_{n}$ with $\mu
\left( B_{k_{n}}\cap A_{n}\right) >0.$ Thus%
\begin{equation*}
\rho _{\varphi }\left( \lambda x\chi _{A_{n}}\right) =I_{\varphi }\left(
\lambda C\left( x\chi _{A_{n}}\right) \right) \geq I_{\varphi }\left(
\lambda \left( Cx\right) \chi _{A_{n}}\right) \geq I_{\varphi }\left(
\lambda \left( Cx\right) \chi _{B_{k_{n}}\cap A_{n}}\right) =\infty
\end{equation*}
for each $\lambda >4b_{\varphi }/\delta ,$ whence $\left\Vert x\chi
_{A_{n}}\right\Vert _{Ces(\varphi )}\nrightarrow 0.$ Thus $x\notin \left(
Ces_{\varphi }\right) _{a}.$
\end{proof}

\begin{remark}
\label{rem: kopial8}Let $I=[0,1]$ or $I=[0,\infty ).$ \newline
$\left( i\right) $ Suppose there is a sequence $\left( z_{n}\right)
_{n=1}^{\infty }$ in $Ces_{\varphi }\left( I\right) $ of pairwise disjoint
supports satisfying conditions: \newline
$\left( a\right) $ $\left\Vert z_{n}\right\Vert _{Ces(\varphi )}=1$, for
each $n.$\newline
$\left( b\right) $ $\left\Vert \sup_{n}z_{n}\right\Vert _{Ces(\varphi )}=1$.%
\newline
Then $Ces_{\varphi }\left( I\right) $ contains an order isomorphically
isometric copy of $l^{\infty }$.\newline
$\left( ii\right) $ Assume that $\varphi <\infty $ and there is an element $%
z\in $ $Ces_{\varphi }\left( I\right) $ such that $\left\Vert z\right\Vert
_{Ces(\varphi )}=1$ and 
\begin{equation*}
\delta (z):=\inf \Big\{\lambda >0:\rho _{\varphi }\left( \frac{z}{\lambda }%
\right) <\infty \Big\}=1.
\end{equation*}%
Then $Ces_{\varphi }\left( I\right) $ contains an order isomorphically
isometric copy of $l^{\infty }$.\newline
$\left( iii\right) $ Let $b_{\varphi }<\infty .$ Suppose there is an element 
$z\in $ $Ces_{\varphi }\left( I\right) $ such that $\left\Vert z\right\Vert
_{Ces(\varphi )}=1$ and%
\begin{equation}
\rho _{\varphi }\left( \frac{z-u}{\lambda }\right) =\infty  \label{ro-phi}
\end{equation}%
for all $\lambda <1$ and $u\in \left( Ces_{\varphi }\right) _{a}.$ Then $%
Ces_{\varphi }\left( I\right) $ contains an order isomorphically isometric
copy of $l^{\infty }$.
\end{remark}

\begin{proof}
$\left( i\right) $ It is enough to apply Theorem 1 in \cite{Hu98}. Note that
each Banach ideal space (in particular $Ces_{\varphi }$) satisfies the
assumption of Theorem 1 in \cite{Hu98}.\newline
$\left( ii\right) $ We apply Theorem 2 from \cite{Hu98}. Note that if $%
\varphi <\infty $ then $\left( Ces_{\varphi }\right) _{a}=C_{\varphi }$ (by
Theorem 5 from \cite{KK16}). Next, if $\varphi <\infty $ and $Ces_{\varphi
}\neq \left\{ 0\right\} $ then $\limfunc{supp}\left( Ces_{\varphi }\right)
_{a}=\limfunc{supp}Ces_{\varphi }=I$ (for $I=[0,\infty )$ it is enough to
apply Proposition 3 from \cite{KK16})$.$ Moreover, since $L^{\varphi }\in
\left( FP\right) ,$ so $Ces_{\varphi }\in \left( FP\right) $ (see Theorem 1
in \cite{LM15a})$,$ in particular $Ces_{\varphi }$ is monotone complete.
Consequently, the space $Ces_{\varphi }$ satisfies the assumptions of
Theorem 2 from \cite{Hu98}. On the other hand, $Ces_{\varphi }$ is a modular
space generated by a convex modular $\rho _{\varphi }.$ By Theorem 2.1 in 
\cite{GH99} we get 
\begin{equation*}
\text{dist}\left( z,\left( Ces_{\varphi }\right) _{a}\right) =\inf \Big\{%
\lambda >0:\rho _{\varphi }\left( \frac{x}{\lambda }\right) <\infty \Big\}.
\end{equation*}%
Thus the conclusion follows from Theorem 2 from \cite{Hu98}.\newline
$\left( iii\right) $ Note that $\limfunc{supp}\left( Ces_{\varphi }\right)
_{a}=I$ by Lemma \ref{supp Ces_fi_a}. Similarly as in $\left( ii\right) $
above we conclude that the space $Ces_{\varphi }$ satisfies the assumptions
of Theorem 2 from \cite{Hu98}. We prove that $dist\left( z,\left(
Ces_{\varphi }\right) _{a}\right) =1.$ Clearly, $dist\left( z,\left(
Ces_{\varphi }\right) _{a}\right) \leq 1.$ We claim that $\left\Vert
z-u\right\Vert _{Ces(\varphi )}\geq 1$ for each $u\in \left( Ces_{\varphi
}\right) _{a}.$ Let $u\in \left( Ces_{\varphi }\right) _{a}.$ The set $%
\left\{ \lambda >0:\rho _{\varphi }\left( \frac{z-u}{\lambda }\right) \leq
1\right\} $ is nonempty. By the assumption (\ref{ro-phi}) we have%
\begin{equation*}
\left\Vert z-u\right\Vert _{Ces(\varphi )}=\inf \left\{ \lambda >0:\rho
_{\varphi }\left( \frac{z-u}{\lambda }\right) \leq 1\right\} \geq 1,
\end{equation*}%
which proves the claim. Thus $dist\left( z,\left( Ces_{\varphi }\right)
_{a}\right) =1$ and, by Theorem 2 from \cite{Hu98}, $Ces_{\varphi }\left(
I\right) $ contains an order isomorphically isometric copy of $l^{\infty }$.
\end{proof}

The following characterization of order continuity is well known. \bigskip 
\newline
\textbf{Theorem A}.\ (G. Ya. Lozanovski{\u{\i}}, see \cite{Lo69}) A Banach
ideal space $E$ is order continuous if and only if $E$ contains no
isomorphic copy of $l^{\infty }$.

\begin{theorem}
\label{th: kopia01} Let $\varphi $ be an Orlicz function. \newline
$\left( i\right) $ In the case when $\varphi \left( b_{\varphi }\right)
=\infty $ we assume that for each $\epsilon >0$ there exists a constant $%
D=D\left( \varepsilon \right) >0$ such that 
\begin{equation}
\rho _{\varphi }(f)\leq DI_{\varphi }(f)+\epsilon ,  \label{Kl-Kl}
\end{equation}%
for all $f\in KL^{\varphi }[0,1]$ satisfying $\left\vert f\left( t\right)
\right\vert \geq \varphi ^{-1}\left( \varepsilon \right) $ for a.e. $t\in 
\limfunc{supp}\left( f\right) $ (see (\ref{def: klasa orlicza}) for the
definition). If $\varphi \notin \Delta _{2}(\infty )$ then $Ces_{\varphi
}[0,1]$ contains an order isomorphically isometric copy of $l^{\infty }$.%
\newline
$\left( ii\right) $ If $\varphi \in \Delta _{2}(\infty )$ then $Ces_{\varphi
}[0,1]$ does not contain an order isomorphic copy of $l^{\infty }.$
\end{theorem}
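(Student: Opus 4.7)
Part $(ii)$ is essentially a recall of existing results: the characterization of order continuity for $Ces_{\varphi}[0,1]$ obtained in \cite{KK16} shows that $\varphi \in \Delta_{2}(\infty)$ forces $(Ces_{\varphi}[0,1])_a = Ces_{\varphi}[0,1]$, and Theorem A then immediately rules out even an isomorphic copy of $l^{\infty}$.

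For part $(i)$, the strategy is to feed the failure of $\Delta_{2}(\infty)$ into one of the sufficient conditions supplied by Remark \ref{rem: kopial8}. The hypothesis $\varphi \notin \Delta_{2}(\infty)$ provides, in the standard way, a sequence $(u_n)$ with either $u_n \to \infty$ or $u_n \nearrow b_{\varphi}$, satisfying $\varphi(u_n) < \infty$ and $\varphi(c_n u_n) \geq 2^n \varphi(u_n)$ for appropriate factors $c_n > 1$ (with $c_n \to 1$ allowed in the second case). This is the raw material from which a witness for $l^{\infty}$ will be manufactured, and I would split the argument according to the value of $b_{\varphi}$.

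In the case $b_{\varphi} = \infty$ (which forces $\varphi < \infty$), I would target Remark \ref{rem: kopial8}$(ii)$. Take pairwise disjoint intervals $A_n \subset [0,1]$ of rapidly decreasing length $h_n$ and set $z = \sum_n u_n \chi_{A_n}$, with $u_n$ and $h_n$ tuned so that $\rho_{\varphi}(z) \leq 1$ while $\rho_{\varphi}(\lambda z) = \infty$ for every $\lambda > 1$. The analytic point is to control $\rho_{\varphi} = I_{\varphi} \circ C$: placing the $A_n$ far apart and making each $h_n$ sufficiently small ensures that $C(u_n \chi_{A_n})$ contributes to $I_{\varphi}$ essentially only on $A_n$ itself, with a negligible tail on $[0,1] \setminus A_n$, so a disjoint-supports style calculation yields $\|z\|_{Ces(\varphi)} = 1$ and $\delta(z) = 1$.

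The case $b_{\varphi} < \infty$ is where I expect the real work, and here I would invoke Remark \ref{rem: kopial8}$(iii)$ with a witness $z$ built from blocks of height approaching $b_{\varphi}$. The required condition $\rho_{\varphi}((z-u)/\lambda) = \infty$ for every $\lambda < 1$ and every $u \in (Ces_{\varphi})_a$ asks for quantitative control over how much of $z$ an order continuous function can absorb. Lemma \ref{supp Ces_fi_a} supplies the key fact that $Cu(t) \to 0$ as $t \to 0^+$ for $0 \leq u \in (Ces_{\varphi})_a$, so on the small intervals supporting the blocks of $z$ the function $u$ is essentially negligible against $z$, and the failure of $\Delta_{2}$ drives the modular to infinity. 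The subcase $\varphi(b_{\varphi}) = \infty$ is the hardest, because the jump of $\varphi$ at $b_{\varphi}$ makes naive $I_{\varphi}$-estimates insensitive to rescaling; the supplementary hypothesis (\ref{Kl-Kl}) is exactly the device for converting $\rho_{\varphi}$-estimates into $I_{\varphi}$-estimates (up to a controllable $\epsilon$), which allows the classical Orlicz block construction to be transported through the Ces\`aro averaging operator $C$. This transport through $C$ is the main obstacle I anticipate.
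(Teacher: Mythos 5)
Your overall architecture matches the paper's (case split on $b_\varphi$, Remark \ref{rem: kopial8}(ii) when $b_\varphi=\infty$, Remark \ref{rem: kopial8}(iii) together with Lemma \ref{supp Ces_fi_a} when $b_\varphi<\infty$, and part (ii) via order continuity and Theorem A), but in the case $b_\varphi=\infty$ your sketch has a genuine gap. To invoke Remark \ref{rem: kopial8}(ii) you need $\delta(z)=1$, i.e. $\rho_\varphi(\lambda z)=\infty$ for \emph{every} $\lambda>1$. A single sequence $(u_n)$ with $\varphi(c_nu_n)\ge 2^n\varphi(u_n)$ for factors $c_n$ not tending to $1$ only forces blow-up at scales $\lambda\ge\inf_n c_n$, hence only $\delta(z)\ge 1/\inf_n c_n<1$. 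You explicitly relegate ``$c_n\to 1$'' to the case $b_\varphi<\infty$, which is exactly backwards: when $b_\varphi<\infty$ one gets $\varphi(\lambda u_n)=\infty$ for free as soon as $\lambda u_n>b_\varphi$, whereas when $b_\varphi=\infty$ one must extract, for each $i$, a sequence $(u_n^i)$ with $\varphi\bigl((1+\tfrac1i)u_n^i\bigr)>2^{n+i}\varphi(u_n^i)$ and then \emph{interleave} all of these rows into one monotone block function so that the truncated element $z$ still meets every row infinitely often. This diagonal interleaving is the heart of the paper's case (A) and is absent from your proposal.

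The second problem is your claim that the Ces\`aro tails of the blocks become negligible ``by placing the $A_n$ far apart and making each $h_n$ sufficiently small.'' There is no a priori control of $I_\varphi(C|z|)$ by $I_\varphi(z)$ -- the operator $C$ need not even be bounded on $L^\varphi$ (cf.\ Corollary \ref{coro: inkluzja}) -- so this ``disjoint-supports style calculation'' does not close. Hypothesis (\ref{Kl-Kl}) is precisely the device that turns $I_\varphi(z)\le 1/(2D)$ into $\rho_\varphi(z)\le 1$, and it is needed already when $b_\varphi=\infty$, since then $\varphi(b_\varphi)=\lim_{u\to\infty}\varphi(u)=\infty$ automatically and the paper applies (\ref{Kl-Kl}) in that very case; you assign it a role only in the subcase $b_\varphi<\infty$, $\varphi(b_\varphi)=\infty$. (A related detail you omit: to apply (\ref{Kl-Kl}) one must arrange $|z|\ge\varphi^{-1}(\epsilon)$ on $\supp z$, which the paper secures by taking $\varphi(u_n^i)\ge 1$.) Your treatment of part (ii) and of the $b_\varphi<\infty$ cases is correct in outline.
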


\begin{proof}
$(i)$. We divide the proof into two parts.

\noindent (A) Suppose $b_{\varphi }=\infty $. Since $\varphi \notin \Delta
_{2}(\infty )$, for each $i\in \mathbb{N}$ there is a sequence $%
(u_{n}^{i})_{n\in \mathbb{N}}\subset \mathbb{R}_{+}$ such that $%
u_{n}^{i}\nearrow \infty $ as $n\rightarrow \infty $, 
\begin{equation*}
\varphi \left( \left( 1+\frac{1}{i}\right) u_{n}^{i}\right) >2^{n+i}\varphi
(u_{n}^{i})
\end{equation*}%
and $\varphi (u_{n}^{i})\geq 1$ for all $n\in \mathbb{N}$. Moreover, there
exists a sequence of pairwise disjoint intervals $A^{i}$ such that $%
\bigcup_{i=1}^{\infty }A^{i}=[0,1)$ and $\mu (A^{i})=2^{-i}$ for $i\in 
\mathbb{N}$. Consequently, we can choose a sequence of intervals $%
(A_{n}^{i})_{n=1}^{\infty }$ with the following properties:\newline
$\left( a\right) $ $(A_{n}^{i})\subset A^{i}$ for all $n\in \mathbb{N}$,%
\newline
$\left( b\right) $ $A_{n}^{i}\cap A_{m}^{i}=\emptyset $ for all $n\neq m$, $%
n,m\in \mathbb{N}$,\newline
$\left( c\right) $ $\mu (A_{n}^{i})=\frac{2^{-n-i}}{\varphi (u_{n}^{i})}$
for all $n\in \mathbb{N}$,\newline
for all $i\in \mathbb{N}$. This is possible, since $\sum_{n=1}^{\infty }%
\frac{2^{-n-i}}{\varphi (u_{n}^{i})}<2^{-i}$ for all $i\in \mathbb{N}$.
Finally, for $i=1,2,3,\ldots $ define the elements 
\begin{equation*}
x_{i}=\sum_{n=1}^{\infty }u_{n}^{i}\chi _{A_{n}^{i}},
\end{equation*}%
and $x=\sum_{i=1}^{\infty }x_{i}$. From the construction it follows that $%
\supp(x_{i})\cap \supp(x_{j})=\emptyset $ for all $i\neq j$. Moreover, 
\begin{equation*}
I_{\varphi }(x_{i})=\int_{0}^{1}\varphi
(x_{i}(t))dt=\int_{0}^{1}\sum_{n=1}^{\infty }\varphi (u_{n}^{i})\chi
_{A_{n}^{i}}(t)dt
\end{equation*}

\begin{equation*}
=\sum_{n=1}^{\infty }\varphi (u_{n}^{i})\mu (A_{n}^{i})=\sum_{n=1}^{\infty
}\varphi (u_{n}^{i})\frac{2^{-n-i}}{\varphi (u_{n}^{i})}=2^{-i},
\end{equation*}%
and 
\begin{equation*}
I_{\varphi }(x)=\int_{0}^{1}\sum_{i=1}^{\infty }\varphi
(x_{i}(t))dt=\sum_{i=1}^{\infty }\int_{0}^{1}\varphi (x_{i}(t))dt=1.
\end{equation*}%
Without loss of generality we may assume that $x_{i}$ is increasing for each 
$i\in \mathbb{N}$. Consider elements $u_{n}^{i}$ in the following way 
\begin{equation}
\begin{matrix}
u_{1}^{1} & u_{2}^{1} & u_{3}^{1} & \cdots & u_{n}^{1} & \cdots \\ 
u_{1}^{2} & u_{2}^{2} & u_{3}^{2} & \cdots & u_{n}^{2} & \cdots \\ 
u_{1}^{3} & u_{2}^{3} & u_{3}^{3} & \cdots & u_{n}^{3} & \cdots \\ 
\vdots & \vdots & \vdots & \ddots & \vdots & \ddots%
\end{matrix}
\label{array: 1}
\end{equation}%
Order the set $\{u_{1}^{1},u_{1}^{2},u_{2}^{1}\}$ from the smallest to the
largest in the sense of order $\leq $. Denote ordered elements by $%
v_{1},v_{2}$ and $v_{3}$. That is, $v_{1}\leq v_{2}\leq v_{3}$. Since for
each $i\in \mathbb{N}$, $u_{n}^{i}\rightarrow \infty $ as $n\rightarrow
\infty $, so there exist indices $i_{1}^{2},i_{2}^{2},i_{3}^{2}\in \mathbb{N}
$, $i_{1}^{2}>2$ and $i_{2}^{2}>1$ such that $%
u_{i_{1}^{2}}^{1},u_{i_{2}^{2}}^{2},u_{i_{3}^{2}}^{3}\geq v_{3}$. Order the
set $\{u_{i_{1}^{2}}^{1},u_{i_{2}^{2}}^{2},u_{i_{3}^{2}}^{3}\}$ as before
and denote ordered elements by $v_{4},v_{5}$ and $v_{6}$. Clearly, the
sequence $(v_{n})_{n=1}^{6}$ is non-decreasing. Generally, in the $m$-th
step we find indexes $i_{1}^{m},i_{2}^{m},\ldots ,i_{m+1}^{m}\in \mathbb{N}$%
, $i_{1}^{m}\geq i_{1}^{m-1}$, $i_{2}^{m}\geq i_{2}^{m-1},\ldots
,i_{m}^{m}\geq i_{m}^{m-1}$ such that $u_{i_{1}^{m}}^{1},u_{i_{2}^{m}}^{2},%
\ldots ,u_{i_{m+1}^{m}}^{m+1}\geq v_{l_{m}}$ and, ordering the set $%
\{u_{i_{1}^{m}}^{1},u_{i_{2}^{m}}^{2},\ldots ,u_{i_{m+1}^{m}}^{m+1}\}$, we
obtain a sequence $(v_{i})_{i=1}^{l_{m}}$, where $l_{1}=3$ and $%
l_{m}=l_{m-1}+m+1$ for $m\geq 2$, which is non-decreasing. By the induction,
we construct a sequence $(v_{n})_{n=1}^{\infty }$ which is non-decreasing.
Corresponding to each value $v_{n}$ the respective interval $A_{n}^{i}$ we
can denote by $A_{n}$ for simplicity. Define an element 
\begin{equation*}
y=\sum_{n=1}^{\infty }y_{n},
\end{equation*}%
where $y_{n}=v_{n}\chi _{B_{n}}$, $B_{n}=\left( a-\sum_{m=1}^{n}\mu
(A_{m}),a-\sum_{m=1}^{n-1}\mu (A_{m})\right) $ and $a=\sum_{n=1}^{\infty
}\mu (A_{n})$. We have $I_{\varphi }(y)\leq I_{\varphi }(x)\leq 1.$ Applying
condition (\ref{Kl-Kl}) for $\epsilon =1/2$ we find $t\in (0,1]$ such that $%
I_{\varphi }(z)\leq 1/2D$ and $\left\vert z\left( t\right) \right\vert \geq
\varphi ^{-1}\left( 1/2\right) $ for a.e. $t\in \limfunc{supp}\left(
z\right) $, where $z=y\chi _{\lbrack 0,t)}$. Therefore, we have 
\begin{equation*}
\rho _{\varphi }(z)\leq DI_{\varphi }(z)+1/2=1.
\end{equation*}%
Moreover, there exists $n_{0}\in \mathbb{N}$ such that $\sum_{n=n_{0}}^{%
\infty }\mu (B_{n})\leq t$. Set $\lambda >0$. There is $i_{0}$ with $\lambda
\geq 1/i_{0}$. We have 
\begin{equation*}
\rho _{\varphi }((1+\lambda )z)=I_{\varphi }(C((1+\lambda )z))\geq
I_{\varphi }((1+\lambda )z)\geq I_{\varphi }\left( \left( 1+\frac{1}{i_{0}}%
\right) z\right) .
\end{equation*}%
Consider the modular $I_{\varphi }((1+1/i_{0})z)$. By the construction of
element $z$, we have chosen infinitely many terms from the $i_{0}$-th row of
matrix (\ref{array: 1}). Denote them by $u_{n_{k}}^{i_{0}}$ for $%
k=1,2,\ldots $. We have 
\begin{equation*}
\varphi \left( \left( 1+\frac{1}{i_{0}}\right) u_{n_{k}}^{i_{0}}\right)
>2^{n_{k}+i_{0}}\varphi (u_{n_{k}}^{i_{0}}),
\end{equation*}%
for each $k$ and $\mu (A_{n_{k}}^{i_{0}})=2^{-n_{k}-i_{0}}/\varphi
(u_{n_{k}}^{i_{0}})$. Therefore, 
\begin{equation*}
I_{\varphi }\left( \left( 1+\frac{1}{i_{0}}\right) z\right) \geq
\sum_{k=k_{0}}^{\infty }\varphi \left( \left( 1+\frac{1}{i_{0}}\right)
u_{n_{k}}^{i_{0}}\right) \mu (A_{n_{k}}^{i_{0}})\geq \sum_{k=k_{0}}^{\infty
}1=\infty .
\end{equation*}%
Consequently, $\left\Vert z\right\Vert _{Ces(\varphi )}=1$ and, by Remark %
\ref{rem: kopial8}$\left( ii\right) $, we conclude that $Ces_{\varphi }[0,1]$
contains an order isomorphically isometric copy of $l^{\infty }$.

\noindent (B) Suppose $b_{\varphi }<\infty .$ We\ apply Remark \ref{rem:
kopial8}$\left( i\right) $.

\begin{enumerate}
\item[(B1)] Let $\varphi (b_{\varphi })=\infty $. Let $(u_{n})\subset 
\mathbb{R_{+}}$ be the sequence with $u_{n}\nearrow b_{\varphi }^{-}$. Since 
$\varphi (u_{n})\rightarrow \infty $, therefore we can assume that $\varphi
(u_{n})\geq 1$ for all $n\in \mathbb{N}$. Put $a_{n}=1/2^{n}\varphi (u_{n})$
for $n\in \mathbb{N}$ and denote $a=\sum_{n=1}^{\infty }a_{n}$. Define a
sequence of pairwise disjoint open intervals $(A_{n})_{n\in \mathbb{N}%
}\subset \lbrack 0,1]$ as follows 
\begin{equation*}
A_{n}=\left( a-\sum_{k=1}^{n}\frac{1}{2^{k}\varphi (u_{k})}%
,a-\sum_{k=1}^{n-1}\frac{1}{2^{k}\varphi (u_{k})}\right) ,
\end{equation*}%
for all $n\in \mathbb{N}$. Let 
\begin{equation*}
x=\sum_{n=1}^{\infty }u_{n}\chi _{A_{n}}.
\end{equation*}%
We have 
\begin{equation*}
I_{\varphi }(x)=\int_{0}^{\infty }\varphi (x)d\mu =\int_{0}^{\infty
}\sum_{n=1}^{\infty }\varphi (u_{n})\chi _{A_{n}}d\mu
\end{equation*}%
\begin{equation*}
=\sum_{n=1}^{\infty }\int_{A_{n}}\varphi (u_{n})d\mu =\sum_{n=1}^{\infty
}\varphi (u_{n})\mu (A_{n})=\sum_{n=1}^{\infty }\frac{1}{2^{n}}=1.
\end{equation*}%
Now, like in (A), we can find $t\in (0,1]$ with $\rho _{\varphi }(z)\leq 1$,
where $z=x\chi _{\lbrack 0,t)}$. Hence, relabeling if necessary, we may
assume that 
\begin{equation*}
z=\sum_{n=1}^{\infty }u_{n}\chi _{A_{n}}.
\end{equation*}%
Because for all $\lambda >0$ there exists $N=N(\lambda )\in \mathbb{N}$ such
that for all $k\geq N$, $(1+\lambda )u_{k}>b_{\varphi }$, hence 
\begin{equation*}
\rho _{\varphi }((1+\lambda )z)=I_{\varphi }(C((1+\lambda )z))=I_{\varphi
}((1+\lambda )Cz)
\end{equation*}%
\begin{equation*}
\geq I_{\varphi }((1+\lambda )z)=\sum_{n=1}^{\infty }\varphi ((1+\lambda
)u_{n})\mu (A_{n})=\infty .
\end{equation*}%
Therefore $\left\Vert z\right\Vert _{Ces(\varphi )}=1$. We will show that 
\begin{equation*}
\rho _{\varphi }\left( \lambda \left( z-u\right) \right) =\infty
\end{equation*}%
for all $\lambda >1$ and $u\in \left( Ces_{\varphi }\right) _{a}$ which
would imply that $Ces_{\varphi }[0,1]$ contains an order isomorphically
isometric copy of $l^{\infty }$ (see Remark \ref{rem: kopial8}$\left(
iii\right) $). Let $\lambda >1$ and $u\in \left( Ces_{\varphi }\right) _{a}.$
We have 
\begin{equation*}
\rho _{\varphi }\left( \lambda \left( z-u\right) \right) =I_{\varphi }\left(
C\left( \left\vert \lambda \left( z-u\right) \right\vert \right) \right)
\geq I_{\varphi }\left( \lambda C\left\vert \left\vert z\right\vert
-\left\vert u\right\vert \right\vert \right) .
\end{equation*}%
By Lemma \ref{supp Ces_fi_a}, $\lim_{t\rightarrow 0^{+}}C\left\vert
u\right\vert \left( t\right) =0.$ Clearly, $Cz\geq z,$ because $z$ is
nonincreasing. Then, since the function $C\left\vert u\right\vert $ is
continuous, there is $\gamma =\gamma \left( \lambda \right) >0$ such that 
\begin{equation*}
\lambda Cz\left( t\right) \geq \lambda z\left( t\right) >b_{\varphi
}+\lambda C\left\vert u\right\vert \left( t\right)
\end{equation*}%
for $t\in \left( 0,\gamma \right) .$ Consequently,%
\begin{equation*}
I_{\varphi }\left( \lambda C\left( \left\vert \left\vert z\right\vert
-\left\vert u\right\vert \right\vert \right) \right) \geq \int_{0}^{\gamma
}\varphi \left( \lambda \left( Cz\left( t\right) -C\left\vert u\right\vert
\left( t\right) \right) \right) dt=\infty ,
\end{equation*}%
which finishes the proof.

\item[(B2)] Suppose $\varphi (b_{\varphi })<\infty $. Let us define an
element $x=b_{\varphi }\chi _{\lbrack 0,1]}$. Then $x\in Ces_{\varphi }[0,1]$%
. Indeed, 
\begin{equation*}
\rho _{\varphi }(x)=I_{\varphi }(Cx)=I_{\varphi }(x)=\int_{0}^{1}\varphi
(b_{\varphi })d\mu =\varphi (b_{\varphi })<\infty .
\end{equation*}%
There exists $t>0$ with 
\begin{equation*}
\rho _{\varphi }\left( x\chi _{\lbrack 0,t)}\right) =\int_{0}^{t}\varphi
(b_{\varphi })ds+\int_{t}^{1}\varphi \left( t\frac{b_{\varphi }}{s}\right)
ds\leq 1.
\end{equation*}%
Moreover, for each $\lambda >0$ 
\begin{equation*}
\rho _{\varphi }((1+\lambda )x\chi _{\lbrack 0,t)})\geq t\varphi ((1+\lambda
)b_{\varphi })=\infty .
\end{equation*}%
For $z=x\chi _{\lbrack 0,t)}$ we have $\left\Vert z\right\Vert _{Ces(\varphi
)}=1.$ Then, like in (B1) above, we conclude that $Ces_{\varphi }[0,1]$
contains an order isomorphically isometric copy of $l^{\infty }$.
\end{enumerate}

\noindent $(ii)$. If $\varphi \in \Delta _{2}(\infty )$ then $L^{\varphi
}[0,1]\in (\text{OC})$ (see for example \cite[p. 21]{Ma89}). Consequently, $%
Ces_{\varphi }[0,1]\in (\text{OC})$ (see \cite[Lemma 1 (a)]{LM15p}). By
Theorem A, $Ces_{\varphi }[0,1]$ cannot contain isomorphic copy of $%
l^{\infty }$.
\end{proof}

\begin{theorem}
\label{th: kopia08} Let $\varphi $ be an Orlicz function. \newline
$\left( i\right) $ In the case when $\varphi \left( b_{\varphi }\right)
=\infty $ we assume that there exists constant $D>0$ such that 
\begin{equation}
\rho _{\varphi }(f)\leq DI_{\varphi }(f),  \label{Kl-Kl-2}
\end{equation}%
for all $f\in KL^{\varphi }[0,\infty )$ (see (\ref{def: klasa orlicza})). If 
$\varphi \notin \Delta _{2}(\mathbb{R}_{+})$ then the space $Ces_{\varphi
}[0,\infty )$ contains an order isomorphically isometric copy of $l^{\infty
} $.\newline
$\left( ii\right) $ If $\varphi \in \Delta _{2}(\mathbb{R}_{+})$ then $%
Ces_{\varphi }[0,1]$ does not contain an order isomorphic copy of $l^{\infty
}.$
\end{theorem}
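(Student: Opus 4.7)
The plan is to parallel the proof of Theorem \ref{th: kopia01}, replacing the $[0,1]$-specific truncation trick by the uniform hypothesis (\ref{Kl-Kl-2}) and placing the witnessing supports inside $[0,\infty)$. In each subcase I want to exhibit $z\in Ces_\varphi[0,\infty)$ with $\|z\|_{Ces(\varphi)}=1$ that verifies either Remark \ref{rem: kopial8}$(ii)$ or $(iii)$, according to whether $b_\varphi=\infty$ or $b_\varphi<\infty$. The case split is the same as before ($b_\varphi=\infty$ versus $b_\varphi<\infty$ with $\varphi(b_\varphi)$ finite or infinite), with the one novelty that $\varphi\notin\Delta_2(\mathbb{R}_+)$ now permits failure of $\Delta_2$ at $0$ as well as at $\infty$, so the first case must be split accordingly.

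For $b_\varphi=\infty$, I would reuse the doubly indexed scheme: pick $u_n^i$ witnessing the $\Delta_2$ failure (with $u_n^i\nearrow\infty$ if $\varphi\notin\Delta_2(\infty)$, or $u_n^i\searrow 0$ if only $\varphi\notin\Delta_2(0)$) and satisfying $\varphi((1+1/i)u_n^i)>2^{n+i}\varphi(u_n^i)$. Take pairwise disjoint intervals $A_n^i\subset[0,\infty)$ with $\mu(A_n^i)=2^{-n-i}/(D\varphi(u_n^i))$, which is allowed since $[0,\infty)$ has infinite measure, and set $x=\sum_{i,n}u_n^i\chi_{A_n^i}$, so that $I_\varphi(x)=1/D$. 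The crucial step is that hypothesis (\ref{Kl-Kl-2}) now delivers $\rho_\varphi(x)\le D\,I_\varphi(x)\le 1$ without any $\varepsilon$-truncation. For any $\lambda>0$ pick $i_0$ with $1/i_0\le\lambda$; summing along the $i_0$-th row gives $\rho_\varphi((1+\lambda)x)\ge\sum_n\varphi((1+1/i_0)u_n^{i_0})\mu(A_n^{i_0})=\infty$, whence $\|x\|_{Ces(\varphi)}=1$ and $\delta(x)=1$. Remark \ref{rem: kopial8}$(ii)$ then finishes this case.

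For $b_\varphi<\infty$, case $(B1)$ where $\varphi(b_\varphi)=\infty$ follows the same template: pick $u_n\nearrow b_\varphi^-$ with $\varphi(u_n)\ge 1$, take intervals $A_n$ of measure $2^{-n}/(D\varphi(u_n))$ arranged so that $z=\sum_n u_n\chi_{A_n}$ is nonincreasing and concentrated near $0$; then (\ref{Kl-Kl-2}) gives $\rho_\varphi(z)\le 1$, while $(1+\lambda)u_n>b_\varphi$ for large $n$ makes $\rho_\varphi((1+\lambda)z)=\infty$. The distance identity required by Remark \ref{rem: kopial8}$(iii)$ is verified exactly as in Theorem \ref{th: kopia01}$(B1)$: Lemma \ref{supp Ces_fi_a} gives $C|u|(t)\to 0$ at $0^+$, and monotonicity of $z$ controls $Cz$ from below. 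For case $(B2)$ where $\varphi(b_\varphi)<\infty$, take $z=b_\varphi\chi_{[0,T]}$ with $T>0$ so small that $\rho_\varphi(z)\le 1$; this is possible because nontriviality of $Ces_\varphi[0,\infty)$ is equivalent (via the substitution $u=b_\varphi T/s$) to $\int_0^{b_\varphi}\varphi(u)/u^2\,du<\infty$, so that $\int_T^\infty\varphi(b_\varphi T/s)\,ds=O(T)\to 0$. Since $(1+\lambda)b_\varphi>b_\varphi$ we have $\rho_\varphi((1+\lambda)z)=\infty$, and Remark \ref{rem: kopial8}$(iii)$ closes the case.

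Part $(ii)$ is a one-liner: $\varphi\in\Delta_2(\mathbb{R}_+)$ forces $L^\varphi[0,\infty)\in(\mathrm{OC})$, hence $Ces_\varphi[0,\infty)\in(\mathrm{OC})$ by Lemma 1(a) of \cite{LM15p}, and Theorem A rules out any isomorphic copy of $l^\infty$. I anticipate the main obstacle being the $\varphi\notin\Delta_2(0)$ subcase of $b_\varphi=\infty$, which is invisible on $[0,1]$: there the intervals $A_n^i$ corresponding to small $u_n^i$ may have arbitrarily large (and in total divergent) measure, so the placement must be engineered with care, and it is essential that (\ref{Kl-Kl-2}) holds without the lower-threshold restriction present in (\ref{Kl-Kl})---parts of $x$ can attain arbitrarily small values, and any $\varphi^{-1}(\varepsilon)$-cutoff would discard precisely the data that witness the $\Delta_2(0)$ failure.
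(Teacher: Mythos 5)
Your outline matches the paper for the $\Delta_2(\infty)$-failure subcase, for (B1), (B2) and for part (ii), but the $\varphi\notin\Delta_2(0)$ branch --- the only genuinely new content of this theorem relative to Theorem \ref{th: kopia01} --- is where your proposal has real gaps. First, a case you do not cover at all: if $a_{\varphi}>0$ (which automatically forces $\varphi\notin\Delta_2(0)$, and is compatible with $b_{\varphi}=\infty$ and $\varphi\in\Delta_2(\infty)$), your recipe sends you to the subcase ``$u_n^i\searrow 0$'', where eventually $\varphi(u_n^i)=0$ and the prescribed lengths $\mu(A_n^i)=2^{-n-i}/(D\varphi(u_n^i))$ are meaningless; no quantitative information can be extracted from $\varphi((1+1/i)u_n^i)>2^{n+i}\varphi(u_n^i)$ when the right-hand side vanishes. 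The paper isolates this case separately (part (C) of its proof) and dispatches it with the single witness $x=a_{\varphi}\chi_{[0,\infty)}$, for which $\rho_{\varphi}(x)=0$ and $\rho_{\varphi}((1+\lambda)x)=\varphi((1+\lambda)a_{\varphi})\cdot\mu([0,\infty))=\infty$; your scheme needs either this separate argument or a reworking with $u_n^i\downarrow a_{\varphi}$ rather than $u_n^i\searrow 0$.

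Second, even when $a_{\varphi}=0$, your key inequality $\rho_{\varphi}((1+\lambda)x)\ge\sum_n\varphi((1+1/i_0)u_n^{i_0})\mu(A_n^{i_0})$ silently uses $C((1+\lambda)x)\ge(1+\lambda)x$ on the relevant supports, which requires $x$ to be arranged nonincreasingly; with values tending to $0$ and interval lengths $2^{-n-i}/(D\varphi(u_n^i))$ tending to infinity (and summing to infinity), producing a global nonincreasing enumeration of the doubly indexed family is exactly the step you defer with ``the placement must be engineered with care''. That is the missing idea, not a routine detail. The paper avoids the matrix entirely here: it takes a single diagonal sequence $u_n^n$ with multipliers $1+\epsilon_n$, $\epsilon_n\downarrow 0$ (so one decreasing function handles every $\epsilon>0$ at once), supports it on consecutive intervals $\Omega_n$ of length $c_n=1/\varphi((1+\epsilon_n)u_n^n)\ge 1$, cuts off at some $[N,\infty)$ so that (\ref{Kl-Kl-2}) yields $\rho_{\varphi}(g)\le 1$, and lower-bounds the Ces\`aro average by $\tfrac1t\int_N^t|f|\ge(1-N/t)|f(t)|\ge\tfrac{1+\epsilon/2}{1+\epsilon}|f(t)|$ for $t$ large --- an estimate that exploits the infinite-measure tail rather than monotone rearrangement near the origin. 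Either route can be made to work, but as written your proposal neither carries out the arrangement it needs nor covers $a_{\varphi}>0$.
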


\begin{proof}
$(i)$. We divide the proof into three parts.

\noindent (A) Assume that $\varphi >0$ and $\varphi <\infty $. Suppose $%
\varphi \notin \Delta _{2}(\mathbb{R}_{+})$. This means that $\varphi \notin
\Delta _{2}(\infty )$ or $\varphi \notin \Delta _{2}(0)$. In the first case
we can proceed as in the proof of Theorem \ref{th: kopia01}. The proof in
the second case is similar to the proof of Theorem 1 in \cite{KK10}. We
present the details for the reader's convenience.

It is well known that $\varphi \in \Delta _{2}(0)$ if and only if there
exist $L>1$ and $K,u_{0}>0$ such that $\varphi (u_{0})>0$, and $\varphi
(Lu)\leq K\varphi (u)$ for all $u\in \lbrack 0,u_{0}]$ (see \cite[\text{p.} 9%
]{Ch96}). Assume that $\varphi \notin \Delta _{2}(0)$. For each $L>1$ and
every sequence $(K_{n})_{n=1}^{\infty }$ we find a sequence $u_{n}\searrow
0^{+}$ with 
\begin{equation*}
\varphi (Lu_{n})>K_{n}\varphi (u_{n}).
\end{equation*}%
Take a sequence $(K_{n})_{n=1}^{\infty }$ satisfying 
\begin{equation*}
\sum_{n=1}^{\infty }\frac{1}{K_{n}}<\infty .
\end{equation*}%
Let $(\epsilon _{m})_{m=1}^{\infty }$ be any positive, decreasing sequence
converging to zero. For any $m\in \mathbb{N}$ take a sequence $%
(K_{n}^{m})_{n=1}^{\infty }$ of positive reals numbers such that 
\begin{equation*}
\sum_{n=1}^{\infty }\frac{1}{K_{n}^{m}}\leq \frac{1}{2^{m}}.
\end{equation*}%
Now by the first part, for any $m\in \mathbb{N}$ we can find a decreasing
sequence $(u_{n}^{m})_{n=1}^{\infty }$ with $u_{n}^{m}\rightarrow 0$ as $%
n\rightarrow \infty $ and $\varphi ((1+\epsilon
_{m})u_{n}^{m})>K_{n}^{m}\varphi (u_{n}^{m})$ for all $m\in \mathbb{N}$.
Thus 
\begin{equation*}
\sum_{n=1}^{\infty }\frac{\varphi \left( u_{n}^{m}\right) }{\varphi
((1+\epsilon _{m})u_{n}^{m})}<\sum_{n=1}^{\infty }\frac{1}{K_{n}^{m}}\leq 
\frac{1}{2^{m}},
\end{equation*}%
for all $m\in \mathbb{N}$. In view of $u_{n}^{m}\rightarrow 0$ as $%
n\rightarrow \infty $, we can find a subsequence $(n_{k})\subset \mathbb{N}$
such that 
\begin{equation*}
u_{n_{1}}^{1}>u_{n_{2}}^{2}>u_{n_{3}}^{3}>\ldots \enskip.
\end{equation*}%
Hence, without loss of generality, we can assume that for $m>1$, 
\begin{equation*}
u_{m}^{m}<u_{m-1}^{m-1}.
\end{equation*}%
Let 
\begin{equation*}
c_{n}=\frac{1}{\varphi ((1+\epsilon _{n})u_{n}^{n})},
\end{equation*}%
for $n\in \mathbb{N}$. Note that $1\leq c_{n}<\infty $ for all $n\in \mathbb{%
N}$. Define 
\begin{equation*}
f(t)=\sum_{n=1}^{\infty }u_{n}^{n}\chi _{\Omega _{n}}(t),
\end{equation*}%
where $\Omega _{n}=[c_{1}+c_{2}+\cdots +c_{n-1},c_{1}+c_{2}+\cdots
+c_{n-1}+c_{n})\subset \mathbb{R}_{+}$. It is clear that the function $f$ is
decreasing. We have 
\begin{equation*}
I_{\varphi }(f)=\int_{0}^{\infty }\varphi (|f(t)|)dt=\sum_{n=1}^{\infty
}c_{n}\varphi (u_{n}^{n})
\end{equation*}%
\begin{equation*}
\leq \sum_{n=1}^{\infty }\frac{\varphi (u_{n}^{n})}{\varphi ((1+\epsilon
_{n})u_{n}^{n})}\leq \sum_{n=1}^{\infty }\frac{1}{2^{n}}=1.
\end{equation*}%
For any $\epsilon >0$ and for any positive integer $M$, we get 
\begin{equation*}
\int_{M}^{\infty }\varphi ((1+\epsilon )|f(t)|)dt\geq \int_{M_{1}}^{\infty
}\varphi ((1+\epsilon )|f(t)|)dt
\end{equation*}%
\begin{equation*}
=\sum_{n=n_{1}}^{\infty }c_{n}\varphi ((1+\epsilon
)u_{n}^{n})=\sum_{n=n_{1}}^{\infty }\frac{\varphi ((1+\epsilon )u_{n}^{n})}{%
\varphi ((1+\epsilon _{n})u_{n}^{n})}\geq \sum_{n=n_{2}}^{\infty }1=\infty ,
\end{equation*}%
for some $M_{1}\geq M$ and $n_{2}\geq n_{1}$. Since $I_{\varphi }(f)\leq 1$,
there is $N\in (0,\infty )$ such that 
\begin{equation*}
I_{\varphi }(f\chi _{\lbrack N,\infty )})\leq \frac{1}{D}.
\end{equation*}%
Consequently, by the assumption, $\rho _{\varphi }(f\chi _{\lbrack N,\infty
)})\leq 1$. Put $g=f\chi _{\lbrack N,\infty )}$ . Since $f$ is a decreasing
function, for any $\epsilon >0$, 
\begin{equation*}
(1+\epsilon )\frac{1}{t}\int_{N}^{t}|f(x)|dx\geq (1+\epsilon )\left( \frac{%
t-N}{t}\right) |f(t)|
\end{equation*}%
\begin{equation*}
=(1+\epsilon )\left( 1-\frac{N}{t}\right) |f(t)|\geq \left( 1+\frac{\epsilon 
}{2}\right) |f(t)|,
\end{equation*}%
for $t$ large enough. Now, for any $\epsilon >0$ and $N_{1}$ large enough we
get 
\begin{equation*}
\rho _{\varphi }((1+\epsilon )g)=\int_{N}^{\infty }\varphi \left(
(1+\epsilon )\frac{1}{t}\int_{N}^{t}|f(x)|dx\right) dt
\end{equation*}%
\begin{equation*}
\geq \int_{N_{1}}^{\infty }\varphi \left( \left( 1+\frac{\epsilon }{2}%
\right) |f(t)|\right) dt=\infty .
\end{equation*}

We have constructed an element $g\in Ces_{\varphi }[0,\infty )$ such that $%
\rho _{\varphi }(g)\leq 1$ and for every $\epsilon >0$, $\rho _{\varphi
}((1+\epsilon )g)=\infty $. By Remark \ref{rem: kopial8} we conclude that $%
Ces_{\varphi }[0,\infty )$ contains an order isomorphically isometric copy
of $l^{\infty }$.

\noindent (B) Suppose $b_{\varphi }<\infty $ and

\begin{enumerate}
\item[(B1)] $\varphi (b_{\varphi })=\infty $. The proof is similar to the
proof of case (B1) of Theorem \ref{th: kopia01}.

\item[(B2)] $\varphi (b_{\varphi })<\infty $. We just proceed as in the
proof of Theorem \ref{th: kopia01} case (B2). We additionally use the
assumption $Ces_{\varphi }\neq \left\{ 0\right\} $ and Proposition 3 from 
\cite{KK16}.
\end{enumerate}

\noindent (C) Assume that $\varphi <\infty $ and $a_{\varphi }>0$. Put $%
x=a_{\varphi }\chi _{\lbrack 0,\infty )}$. Then $\left\Vert x\right\Vert
_{Ces(\varphi )}=1$ and 
\begin{equation*}
\delta (x)=\inf \{\lambda >0:\rho _{\varphi }(x/\lambda )<\infty \}=1.
\end{equation*}%
Indeed, for any $\lambda >0$ 
\begin{equation*}
\rho _{\varphi }((1+\lambda )x)=I_{\varphi }((1+\lambda )Cx)=I_{\varphi
}((1+\lambda )x)=\varphi ((1+\lambda )a_{\varphi })m([0,\infty ))=\infty .
\end{equation*}%
Therefore, by Remark \ref{rem: kopial8}$\left( ii\right) $, $Ces_{\varphi
}[0,\infty )$ contains an order isomorphically isometric copy of $l^{\infty
} $.

\noindent $(ii)$. We can use analogue arguments as in the proof of Theorem %
\ref{th: kopia01}.
\end{proof}

Now we will discuss the technical assumption from Theorem \ref{th: kopia01}
and Theorem \ref{th: kopia08}. \bigskip

Let us recall the standard form of the integral Hardy inequality (see \cite%
{KMP07}). Suppose $p>1$ and $f$ is a nonnegative $p$-integrable function on $%
I$. Then $f$ is integrable over the interval $(0,x)\cap I$ for each positive 
$x$ and 
\begin{equation*}
\int_{I}\left( \frac{1}{x}\int_{0}^{x}f(t)dt\right) ^{p}dx\leq \left( \frac{p%
}{p-1}\right) ^{p}\int_{I}f(x)^{p}dx.
\end{equation*}

The dilation operator $D_{s}$, $s>0$, defined on $L^{0}(I)$ by 
\begin{equation*}
D_{s}x(t)=x(t/s)\chi _{I}(t/s)=x(t/s)\chi _{\lbrack 0,\min \{1,s\})}(t),
\end{equation*}%
for $t\in I$, is bounded in any symmetric space $E$ on $I$ and $\left\Vert
D_{s}\right\Vert _{E\rightarrow E}\leq \text{max}\{1,s\}$ (see \cite[p. 148]%
{BS88}). For the theory of symmetric (rearrangement invariant) spaces the
reader is referred to \cite{BS88} and \cite{KPS78}. Moreover, the lower and
upper Boyd indices of $E$ are defined by 
\begin{equation*}
p(E)=\lim\limits_{s\rightarrow 0^{+}}\frac{\text{ln}\left\Vert
D_{s}\right\Vert _{E\rightarrow E}}{\text{ln}s},
\end{equation*}%
\begin{equation*}
q(E)=\lim\limits_{s\rightarrow \infty }\frac{\text{ln}\left\Vert
D_{s}\right\Vert _{E\rightarrow E}}{\text{ln}s}.
\end{equation*}%
In particular, they satisfy the inequalities 
\begin{equation*}
1\leq p(E)\leq q(E)\leq \infty .
\end{equation*}%
In the case when $E$ is the Orlicz space $L^{\varphi }$, these indices equal
to the so-called lower and upper Orlicz-Matuszewska indices $\alpha
_{\varphi }$ and $\beta _{\varphi }$ of Orlicz functions generating the
Orlicz spaces, i.e., $\alpha _{\varphi }=p(L^{\varphi })$ and $\beta
_{\varphi }=q(L^{\varphi })$ (see \cite[Proposition 2.b.5 and Remark 2 on
page 140]{LT79}). For more details see \cite{Bo69}, \cite{Bo71}, \cite{Ma85}%
, \cite{Ma89} and \cite{Mu83}. \bigskip \newline
Let us mention the important result about boundedness of the operator $C$.
\bigskip \newline
\textbf{Theorem B.} \cite[\text{p.} 127]{KMP07} For any symmetric space $E$
on $I$ the operator $C:E\rightarrow E$ is bounded if and only if the lower
Boyd index satisfies $p(E)>1$. \bigskip \newline
Note that if $(X,\left\Vert \cdot \right\Vert _{X})$ is a Banach ideal space
then $C:X\rightarrow X$ implies $C$ is bounded. In fact, if $C:X\rightarrow
X $, then $X\hookrightarrow CX$. This means that there is $M>0$ with $%
\left\Vert x\right\Vert _{CX}=\left\Vert C\left\vert x\right\vert
\right\Vert _{X}\leq M\left\Vert x\right\Vert _{X}$ for all $x\in X$, i.e. $%
C $ is bounded. However, it may happen that $X\not\hookrightarrow CX$ (see 
\cite[Proposition 2.1]{DS07}). Moreover, $C:CX\rightarrow X$ is always
bounded (from the definition of $CX$) and $CX$ is so-called optimal domain
of $C$ for $X$ (cf. \cite{DS07} and \cite{LM15p}). The immediate consequence
of Theorem $B$ and the above discussion about indices of the Orlicz space $%
L^{\varphi }$ is a next corollary.

\begin{corollary}
\label{coro: inkluzja} The embedding $L^{\varphi }\hookrightarrow
Ces_{\varphi }$ holds if and only if $\alpha _{\varphi }>1$.
\end{corollary}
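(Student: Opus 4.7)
The plan is to unwind the definitions and chain together the two ingredients already assembled just before the statement, namely Theorem B and the identification $\alpha_{\varphi}=p(L^{\varphi})$.

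First I would rewrite the conclusion of the embedding in terms of the Cesàro operator. By definition of $Ces_{\varphi}$, one has $\|f\|_{Ces(\varphi)}=\|C|f|\|_{\varphi}$, so $L^{\varphi}\hookrightarrow Ces_{\varphi}$ amounts to the existence of $M>0$ with $\|C|f|\|_{\varphi}\le M\|f\|_{\varphi}$ for every $f\in L^{\varphi}$, i.e., to the boundedness of $C:L^{\varphi}\to L^{\varphi}$. As the paper observes right before the corollary, for a Banach ideal space $X$ the mere inclusion $C(X)\subset X$ is enough to conclude boundedness (through $X\hookrightarrow CX$), so "embedding" and "boundedness of $C$" really are equivalent in this setting.

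Next I would invoke Theorem B of \cite{KMP07}: since $L^{\varphi}$ is a symmetric Banach function space on $I$, the operator $C:L^{\varphi}\to L^{\varphi}$ is bounded if and only if the lower Boyd index satisfies $p(L^{\varphi})>1$. Finally, by the identification recalled from \cite[Proposition 2.b.5]{LT79}, the lower Boyd index of an Orlicz space coincides with the lower Orlicz--Matuszewska index, $p(L^{\varphi})=\alpha_{\varphi}$. Combining the three equivalences yields the corollary.

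I do not expect any real obstacle: the argument is a three-step translation (embedding $\Leftrightarrow$ boundedness of $C$ on $L^{\varphi}$ $\Leftrightarrow$ $p(L^{\varphi})>1$ $\Leftrightarrow$ $\alpha_{\varphi}>1$), each step already justified in the paper. The only point deserving a short comment is the first equivalence, since the reader should be reminded that turning a pointwise inclusion into a norm inequality relies on the closed-graph-type remark made just above the corollary.
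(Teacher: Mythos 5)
Your argument is correct and is exactly the one the paper intends: the corollary is stated as an immediate consequence of Theorem B together with the identity $\alpha_{\varphi}=p(L^{\varphi})$ and the preceding remark that $C:X\to X$ already forces $X\hookrightarrow CX$. Your explicit unwinding of $L^{\varphi}\hookrightarrow Ces_{\varphi}$ into the boundedness of $C$ on $L^{\varphi}$ is precisely the intended (unwritten) proof, so there is nothing to add.
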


\begin{remark}
\label{skonczona-calka}If $f\in KL^{\varphi }[0,1]$ then 
\begin{equation*}
\int_{0}^{t}f(s)ds<\infty
\end{equation*}%
for each $t\in \lbrack 0,1]$, because $KL^{\varphi }[0,1]\hookrightarrow
L^{\varphi }[0,1]\hookrightarrow L^{1}[0,1]$. However, we present also the
direct proof, since we need this fact also in the proof of Proposition \ref%
{prop: gamma08skonczone} in the case of $I=[0,\infty )$. Conversely, suppose 
$\int_{0}^{t_{0}}|f(s)|ds=\infty $ for some $t_{0}\in (0,1]$. Consequently, $%
f$ is unbounded on $(0,t_{0})$. Since $\varphi (u)/u\nearrow a$, $a\leq
\infty $, then there is $a_{0}>0$ such that $\varphi (u)\geq a_{0}u/2$ for $%
u\geq u_{3}$. Setting $I_{0}=\{t\in \lbrack 0,1]:|f(t)|\geq u_{3}\}$ we have 
\begin{equation*}
I_{\varphi }(f)\geq \int_{I_{0}}\varphi (|f(s)|)ds\geq \frac{a_{0}}{2}%
\int_{I_{0}\cap \lbrack 0,t_{0}]}|f(s)|ds=\infty ,
\end{equation*}%
whence $f\notin KL^{\varphi }[0,1]$ and the claim is proved.
\end{remark}

\begin{proposition}
\label{prop: gamma01skonczone} Let $\varphi $ be an Orlicz function with $%
\varphi <\infty $. Consider the following conditions:

\begin{enumerate}
\item There exists $p>1$, a convex function $\gamma $ and constants $%
A,B,u_{0}>0$ such that $\varphi (u_{0})>0$ and for all $u\in \lbrack
u_{0},\infty )$, 
\begin{equation*}
A\gamma (u)\leq \varphi (u)^{1/p}\leq B\gamma (u),
\end{equation*}%
i.e. $\varphi ^{1/p}$ is equivalent to a convex function for "large
arguments".

\item For each $\epsilon >0$ there exists a constant $D=D\left( \varepsilon
\right) >0$ such that 
\begin{equation*}
\rho _{\varphi }(f)\leq DI_{\varphi }(f)+\epsilon ,
\end{equation*}%
for all $f\in KL^{\varphi }[0,1]$ satisfying $\left\vert f\left( t\right)
\right\vert \geq \varphi ^{-1}\left( \varepsilon \right) $ for a.e. $t\in 
\limfunc{supp}\left( f\right) .$ Moreover, the Orlicz class $KL^{\varphi
}[0,1]$ is closed under the operator $C$.

\item $C:L^{\varphi }[0,1]\rightarrow L^{\varphi }[0,1]$.
\end{enumerate}

\bigskip \noindent Then (i)$\Rightarrow $(ii)$\Rightarrow $(iii).
\end{proposition}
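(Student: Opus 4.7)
My plan for $(i)\Rightarrow(ii)$ is to combine Jensen's inequality (applied to the convex function $\gamma$ supplied by (i)) with the classical Hardy integral inequality at exponent $p>1$. For $f\ge 0$, convexity of $\gamma$ and Jensen applied to the uniform probability measure on $[0,x]$ give pointwise
\[
\gamma\bigl(Cf(x)\bigr)=\gamma\!\Bigl(\frac{1}{x}\int_0^x f\,dt\Bigr)\le\frac{1}{x}\int_0^x\gamma(f)\,dt=C(\gamma\circ f)(x),
\]
and then Hardy's inequality at exponent $p$ yields $\int_0^1\gamma(Cf)^p\,dx\le(\tfrac{p}{p-1})^p\int_0^1\gamma(f)^p\,dx$. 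Transferring back to $\varphi$ via $A\gamma\le\varphi^{1/p}\le B\gamma$ on $[u_0,\infty)$ (together with the trivial bounds $\varphi(u)\le\varphi(u_0)$, $\gamma(u)^p\le\gamma(u_0)^p$ below $u_0$), and splitting the relevant integrals at the threshold $u_0$, produces an absolute modular inequality
\[
\rho_\varphi(f)\le D_0\, I_\varphi(f)+K,
\]
with $D_0=B^p A^{-p}(\tfrac{p}{p-1})^p$ and $K$ expressible in terms of $\varphi(u_0)$ and $\gamma(u_0)$. Applied to any $f\in KL^\varphi[0,1]$, this already shows $\rho_\varphi(f)<\infty$, hence the Orlicz class is closed under $C$.

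To refine this into the parametrized form of (ii) I would exploit the support condition $|f|\ge\varphi^{-1}(\epsilon)$ on $\supp f$ in two regimes. When $\epsilon\ge\varphi(u_0)$, set $v:=\varphi^{-1}(\epsilon)\ge u_0$ and run the argument above using $v$ as the splitting threshold instead of $u_0$: the ``small-values'' contribution to $\int\gamma(f)^p$ vanishes since $|f|\ge v\ge u_0$ on $\supp f$, while $\int_{\{Cf<v\}}\varphi(Cf)\le\varphi(v)=\epsilon$, producing $\rho_\varphi(f)\le D_0 I_\varphi(f)+\epsilon$ with $D(\epsilon)=D_0$. When $\epsilon<\varphi(u_0)$, I would split $f=f_1+f_2$ at level $u_0$, with $f_1=f\chi_{\{|f|\ge u_0\}}$ and $f_2=f\chi_{\{\varphi^{-1}(\epsilon)\le|f|<u_0\}}$; the Jensen--Hardy argument applies verbatim to $f_1$, and on $\supp f_2$ one has $\varphi(|f_2|)\ge\epsilon$, which gives $\mu(\supp f_2)\le I_\varphi(f)/\epsilon$, an estimate that allows the residual cross-term and the constant $\varphi(u_0)$ to be folded into a coefficient of order $1/\epsilon$ multiplying $I_\varphi(f)$. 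The implication $(ii)\Rightarrow(iii)$ is then a one-line consequence of the closedness part of (ii): for $f\in L^\varphi[0,1]$ choose $\lambda>0$ with $f/\lambda\in KL^\varphi[0,1]$; by closedness $C(|f|/\lambda)\in KL^\varphi\subset L^\varphi$, and $|Cf|\le C|f|=\lambda\cdot C(|f|/\lambda)\in L^\varphi$ gives $Cf\in L^\varphi$.

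The main obstacle will be the case $\epsilon<\varphi(u_0)$ of the refinement. Because $\varphi$ is not assumed to satisfy any $\Delta_2$-type condition, one cannot rescale inside $\varphi$ nor apply the standard convexity trick $\varphi(a+b)\le\tfrac12\varphi(2a)+\tfrac12\varphi(2b)$ to $Cf=Cf_1+Cf_2$, since doing so would introduce factors of $2$ inside the modular that the hypotheses cannot absorb. The decomposition of $f$ along the threshold $u_0$ and the direct use of the support hypothesis to bound $\mu(\supp f_2)$ must therefore be engineered so that every step stays inside the modular $I_\varphi$, with all rescalings confined to multiplicative prefactors outside $\varphi$; this is the delicate part of the argument.
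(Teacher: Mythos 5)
Your overall strategy (Jensen's inequality for the convex function $\gamma$, Hardy's inequality at exponent $p$, transfer via $A\gamma\le\varphi^{1/p}\le B\gamma$) is the same as the paper's, and your treatment of the closedness of $KL^{\varphi}[0,1]$ under $C$, of the regime $\epsilon\ge\varphi(u_0)$, and of (ii)$\Rightarrow$(iii) matches the paper up to a minor repair: hypothesis (i) says nothing about $\gamma$ below $u_0$ (it need not be monotone or even defined there), so your ``trivial bound'' $\gamma(u)^p\le\gamma(u_0)^p$ for $u<u_0$ is not available. The paper instead replaces $|f|$ by $\widetilde f=|f|\chi_{\{|f|\ge u_0\}}+u_0\chi_{\{|f|<u_0\}}$, so that $\widetilde f$ and $C\widetilde f$ stay in $[u_0,\infty)$ where the equivalence holds, at the cost of the additive constant $\varphi(u_0)$ in $I_{\varphi}(\widetilde f)\le I_{\varphi}(f)+\varphi(u_0)$.

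The genuine gap is exactly where you flag it: the case $\epsilon<\varphi(u_0)$. The route you sketch --- decompose $f=f_1+f_2$ at level $u_0$ and use $\mu(\supp f_2)\le I_{\varphi}(f)/\epsilon$ --- cannot be closed by a measure estimate alone. You must bound $\int_0^1\varphi\bigl(C|f_1|(t)+C|f_2|(t)\bigr)\,dt$, and the perturbation $C|f_2|$, although bounded by $u_0$, sits \emph{inside} $\varphi$ added to $C|f_1|$; when $\varphi\notin\Delta_2(\infty)$ (precisely the case in which the proposition is invoked in Theorem \ref{th: kopia01}) the ratio $\varphi(s+u_0)/\varphi(s)$ is unbounded in $s$ and blows up on the set where $C|f_1|$ is large, i.e.\ on the set carrying the main term. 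So the cross-term is a multiplicative, not additive, loss, and no bound on $\mu(\supp f_2)$ converts it into a coefficient of order $1/\epsilon$ times $I_{\varphi}(f)$. The paper avoids decomposing $f$ altogether: setting $u_1=\varphi^{-1}(\epsilon)<u_0$, it extends $\gamma$ to a convex function $\gamma_e$ on $[u_1,\infty)$ by taking $\gamma_e=\gamma(u_0)$ on $[u_1,u_2]$ and $\gamma_e=\gamma$ on $(u_2,\infty)$, where $u_2\ge u_0$ is chosen with $\gamma(u_2)=\gamma(u_0)$ and $\gamma_+'(u_2)\ge 0$; continuity on the compact interval $[u_1,u_2]$ then yields new constants $A'(\epsilon),B'(\epsilon)$ with $A'\gamma_e\le\varphi^{1/p}\le B'\gamma_e$ on all of $[u_1,\infty)$. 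The single-function Jensen--Hardy argument then runs with threshold $u_1$: the set $\{C|f|<u_1\}$ contributes at most $\varphi(u_1)=\epsilon$, while on $\{C|f|\ge u_1\}$ the whole range of $|f|$ over its support lies in $[u_1,\infty)$, so no splitting of $f$ is needed; the $\epsilon$-dependence of $D$ enters only through $A'$ and $B'$. You should adopt this extension of $\gamma$ in place of the decomposition of $f$.
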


\begin{proof}
(i)$\Rightarrow $(ii). Take $\epsilon >0$.\ Let $a_{\varphi }<u_{1}$ be such
that $\varphi (u_{1})=\epsilon .$ \newline
Suppose $u_{1}<u_{0}$ ($u_{0}$ is from the condition $\left( i\right) $). We
claim that we can extend the domain of the function $\gamma $ to the
interval $[u_{1},\infty )$ such that the extended function $\gamma
_{e}:[u_{1},\infty )\rightarrow (0,\infty )$ is still continuous and convex.
Denote by $\gamma _{+}^{\prime }$ the right derivative of $\gamma .$ Since $%
\lim_{u\rightarrow \infty }\varphi \left( u\right) ^{1/p}=\infty $, the
condition $\left( i\right) $ implies that there is $u_{2}\geq u_{0}$ such
that $\gamma _{+}^{\prime }\left( u_{2}\right) \geq 0$ and $\gamma \left(
u_{2}\right) =\gamma \left( u_{0}\right) .$ Taking $\gamma _{e}:=\gamma
(u_{0})\chi _{\lbrack u_{1},u_{2}]}+\gamma \chi _{\left( u_{2},\infty
\right) }$ proves the claim.\newline
Moreover, there exist new constants $A^{\prime },B^{\prime }>0$ such that 
\begin{equation*}
A^{\prime }\gamma _{e}(u)\leq \varphi (u)^{1/p}\leq B^{\prime }\gamma
_{e}(u),
\end{equation*}%
for all $u\geq u_{1}$. In fact, it is sufficient to take $A^{\prime }=\text{%
min}\{A,A^{\prime \prime }\}$ and $B^{^{\prime }}=\text{max}\{B,B^{\prime
\prime }\}$, where 
\begin{equation*}
A^{\prime \prime }=\min \{\varphi (u)^{1/p}/\gamma _{e}(u):u_{1}\leq u\leq
u_{2}\}\text{ and }B^{\prime \prime }=\max \{\varphi (u)^{1/p}/\gamma
_{e}(u):u_{1}\leq u\leq u_{2}\}
\end{equation*}%
(these numbers exist because $\varphi (u)^{1/p}/\gamma _{e}(u)$ is
continuous function on closed interval $[u_{1},u_{2}]$). \newline
If $u_{1}\geq u_{0}$ then we proceed with $\gamma _{e}=\gamma ,$ $A^{\prime
}=A$ and $B^{\prime }=B.$\newline
Let $f\in KL^{\varphi }[0,1]$ be such that $\left\vert f\left( t\right)
\right\vert \geq u_{1}$, for a.e. $t\in \limfunc{supp}\left( f\right) .$ Set 
\begin{equation*}
A_{0}=\{t\in \lbrack 0,1]:C|f|(t)\geq u_{1}\}
\end{equation*}%
and 
\begin{equation*}
B_{0}=[0,1]\backslash A_{0}.
\end{equation*}%
Thus, by Remark \ref{skonczona-calka}, 
\begin{equation*}
C|f|(t)=\frac{1}{t}\int_{0}^{t}|f(s)|ds=\int_{0}^{1}|f(tu)|du<\infty ,
\end{equation*}%
for each $t>0$. Then, by (i),%
\begin{equation*}
\rho _{\varphi }(f)=I_{\varphi }(C|f|)
\end{equation*}%
\begin{equation*}
=\int_{0}^{1}\varphi \left( \int_{0}^{1}|f(tu)|du\right)
dt=\int_{A_{0}}\varphi \left( \int_{0}^{1}|f(tu)|du\right)
dt+\int_{B_{0}}\varphi \left( \int_{0}^{1}|f(tu)|du\right) dt
\end{equation*}%
\begin{equation*}
\leq \left( B^{\prime }\right) ^{p}\int_{A_{0}}\left( \gamma _{e}\left(
\int_{0}^{1}|f(tu)|du\right) \right) ^{p}dt+\varphi (u_{1})
\end{equation*}%
\begin{equation*}
\leq \left( B^{\prime }\right) ^{p}\int_{A_{0}}\left( \gamma _{e}\left(
\int_{0}^{1}|f(tu)|du\right) \right) ^{p}dt+\epsilon .
\end{equation*}%
For each $t\in \lbrack 0,1]$, let 
\begin{equation*}
C_{0}^{t}=\{u\in \lbrack 0,1]:|f(tu)|\geq u_{1}\}\text{ and }%
D_{0}^{t}=[0,1]\backslash C_{0}^{t}.
\end{equation*}%
Since $\left\vert f\left( s\right) \right\vert \geq u_{1}$, for a.e. $s\in 
\limfunc{supp}f,$ so%
\begin{equation}
u_{1}\leq C|f|(t)=\int_{0}^{1}|f(tu)|du=\int_{C_{0}^{t}}|f(tu)|du
\label{C0_t}
\end{equation}%
for $t\in A_{0}.$ Consequently, by Hardy and Jensen inequality, we get 
\begin{equation*}
\rho _{\varphi }(f)=I_{\varphi }(C|f|)\leq \left( B^{\prime }\right)
^{p}\int_{A_{0}}\left( \int_{C_{0}^{t}}\gamma _{e}(|f(tu)|)du\right)
^{p}dt+\epsilon
\end{equation*}%
\begin{equation*}
\leq \left( B^{\prime }\right) ^{p}\int_{A_{0}}\left( \int_{C_{0}^{t}}\left(
A^{\prime }\right) ^{-1}\varphi (|f(tu)|)^{1/p}du\right) ^{p}dt+\epsilon
\end{equation*}%
\begin{equation*}
\leq \left( B^{\prime }\right) ^{p}\int_{0}^{1}\left( \int_{C_{0}^{t}}\left(
A^{\prime }\right) ^{-1}\varphi (|f(tu)|)^{1/p}du\right) ^{p}dt+\epsilon
\end{equation*}%
\begin{equation*}
\leq \left( \frac{B^{\prime }}{A^{\prime }}\right) ^{p}\int_{0}^{1}\left(
\int_{0}^{t}\varphi (|f(s)|)^{1/p}\frac{ds}{t}\right) ^{p}dt+\epsilon
\end{equation*}%
\begin{equation*}
=\left( \frac{B^{\prime }}{A^{\prime }}\right) ^{p}\int_{0}^{1}\left(
C(\varphi (|f|)^{1/p})(t)\right) ^{p}dt+\epsilon \leq \left( \frac{B^{\prime
}p}{A^{\prime }(p-1)}\right) ^{p}\int_{0}^{1}\varphi (|f(t)|)dt+\epsilon
\end{equation*}%
\begin{equation*}
=\left( \frac{B^{\prime }p}{A^{\prime }(p-1)}\right) ^{p}I_{\varphi
}(f)+\epsilon .
\end{equation*}%
The proof is finished with $D=\left( \frac{B^{\prime }p}{A^{\prime }(p-1)}%
\right) ^{p}$ and the constant $D$ depends only on $\epsilon $. \newline
Finally, we will show that the condition $\left( i\right) $ implies that the
Orlicz class $KL^{\varphi }[0,1]$ is closed under the operator $C.$ Take $%
f\in KL^{\varphi }[0,1].$ We apply the above proof with the function $\gamma 
$. Denote%
\begin{equation*}
I_{1}=\left\{ t\in \left[ 0,1\right] :\left\vert f\left( t\right)
\right\vert \geq u_{0}\right\} ,I_{2}=\left\{ t\in \left[ 0,1\right]
:\left\vert f\left( t\right) \right\vert <u_{0}\right\}
\end{equation*}%
and%
\begin{equation*}
\widetilde{f}=\left\vert f\right\vert \chi _{I_{1}}+u_{0}\chi _{I_{2}}.
\end{equation*}%
Clearly, $\left\vert f\right\vert \leq \widetilde{f}.$ Moreover, 
\begin{equation*}
\rho _{\varphi }(f)=I_{\varphi }(C|f|)\leq I_{\varphi }(C\widetilde{f}%
)=\int_{0}^{1}\varphi \left( \int_{0}^{1}\widetilde{f}(tu)du\right) dt.
\end{equation*}%
Note that $\widetilde{f}\geq u_{0}$ whence $\int_{0}^{1}\widetilde{f}%
(tu)du\geq u_{0}$ for $t\in \left[ 0,1\right] .$ Consequently, by Hardy and
Jensen inequality, similarly as in the above proof, applying condition $%
\left( i\right) $ we obtain%
\begin{equation*}
\rho _{\varphi }(f)\leq B^{p}\int_{0}^{1}\left( \gamma \left( \int_{0}^{1}%
\widetilde{f}(tu)du\right) \right) ^{p}dt\leq B^{p}\int_{0}^{1}\left(
\int_{0}^{1}\gamma (\widetilde{f}(tu))du\right) ^{p}dt
\end{equation*}%
\begin{equation*}
\leq \left( \frac{Bp}{A(p-1)}\right) ^{p}I_{\varphi }(\widetilde{f})=\left( 
\frac{Bp}{A(p-1)}\right) ^{p}\int_{0}^{1}\varphi \left( \left\vert
f\right\vert \chi _{I_{1}}+u_{0}\chi _{I_{2}}\right)
\end{equation*}%
\begin{equation*}
=\left( \frac{Bp}{A(p-1)}\right) ^{p}\left( I_{\varphi }(\left\vert
f\right\vert \chi _{I_{1}})+\varphi \left( u_{0}\right) \right) \leq \left( 
\frac{Bp}{A(p-1)}\right) ^{p}\left( I_{\varphi }(f)+\varphi \left(
u_{0}\right) \right) <\infty .
\end{equation*}%
\newline

\noindent (ii)$\Rightarrow $(iii). Take $x\in L^{\varphi }[0,1]$. Of course, 
$I_{\varphi }(\lambda x)<\infty $ for some $\lambda >0$. This means that $%
\lambda x\in KL^{\varphi }[0,1]$. Therefore $\rho _{\varphi }(\lambda
x)<\infty $ from (ii). Consequently, $I_{\varphi }(C|\lambda x|)<\infty $,
i.e. $C(\lambda x)\in L^{\varphi }[0,1]$. Because operator $C$ is
homogeneous we have $\lambda Cx\in L^{\varphi }[0,1]$. Whence $Cx\in
L^{\varphi }[0,1]$.
\end{proof}

\begin{proposition}
\label{prop: gamma08skonczone} Let $\varphi$ be an Orlicz function with $%
\varphi < \infty$. Consider the following conditions:

\begin{enumerate}
\item There exists $p>1$, a convex function $\gamma $ and constants $A,B>0$
such that for all $u\in \lbrack 0,\infty )$ 
\begin{equation*}
A\gamma (u)\leq \varphi (u)^{1/p}\leq B\gamma (u),
\end{equation*}%
i.e. $\varphi ^{1/p}$ is equivalent to a convex function for "all arguments".

\item There exists a constant $D>0$ such that 
\begin{equation*}
\rho _{\varphi }(f)\leq DI_{\varphi }(f),
\end{equation*}%
for all $f\in KL^{\varphi }[0,\infty )$. In particular, the Orlicz class $%
KL^{\varphi }[0,\infty )$ is closed under the operator $C$.

\item $C:L^{\varphi }[0,\infty )\rightarrow L^{\varphi }[0,\infty )$.

\item There exists $x_{0}\in \lbrack 0,\infty )$ such that $%
\int_{x_{0}}^{\infty }\varphi (t^{-1})dt<\infty $.
\end{enumerate}

\bigskip \noindent We have the following implications: (i)$\Rightarrow$(ii)$%
\Rightarrow$(iii)$\Rightarrow$(iv).
\end{proposition}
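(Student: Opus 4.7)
The plan is to prove the chain (i)$\Rightarrow$(ii)$\Rightarrow$(iii)$\Rightarrow$(iv) by closely mimicking the scheme of Proposition \ref{prop: gamma01skonczone}, with the (i)$\Rightarrow$(ii) step actually cleaner (since the equivalence $A\gamma\leq\varphi^{1/p}\leq B\gamma$ now holds on all of $[0,\infty)$, so no auxiliary extension of $\gamma$ and no absorption of an $\varepsilon$-term is required), and with a genuinely new final step (iii)$\Rightarrow$(iv) exploiting that $I$ is unbounded.

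For (i)$\Rightarrow$(ii), I will fix $f\in KL^{\varphi}[0,\infty)$ and first check, via the Orlicz-function argument indicated in Remark \ref{skonczona-calka} (which is explicitly noted there to apply on $[0,\infty)$), that $\int_{0}^{t}|f(s)|\,ds<\infty$ for every $t>0$. After rewriting $C|f|(t)=\int_{0}^{1}|f(tu)|\,du$, Jensen's inequality applied to the convex function $\gamma$ gives $\gamma(C|f|(t))\leq C(\gamma\circ|f|)(t)$. Then the upper bound $\varphi\leq B^{p}\gamma^{p}$, the classical integral Hardy inequality on $[0,\infty)$ (legitimate because $p>1$ and $\gamma\circ|f|\in L^{p}[0,\infty)$ by the lower bound and $f\in KL^{\varphi}$), and finally the lower bound $\gamma^{p}\leq A^{-p}\varphi$ chain together to yield
\[
\rho_{\varphi}(f)\leq B^{p}\int_{0}^{\infty}\bigl(C(\gamma\circ|f|)(t)\bigr)^{p}\,dt\leq\Bigl(\tfrac{Bp}{A(p-1)}\Bigr)^{p}I_{\varphi}(f),
\]
so (ii) holds with $D=(Bp/(A(p-1)))^{p}$. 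The closure of $KL^{\varphi}[0,\infty)$ under $C$ is then immediate, because $I_{\varphi}(C|f|)=\rho_{\varphi}(f)\leq D\,I_{\varphi}(f)<\infty$.

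The implication (ii)$\Rightarrow$(iii) is a one-liner, identical to the corresponding step in Proposition \ref{prop: gamma01skonczone}: take $x\in L^{\varphi}[0,\infty)$, pick $\lambda>0$ so that $\lambda x\in KL^{\varphi}$, and apply (ii) to get $I_{\varphi}(\lambda C|x|)=\rho_{\varphi}(\lambda x)<\infty$, so that $Cx\in L^{\varphi}$ by homogeneity. For (iii)$\Rightarrow$(iv) I will test against $f=\chi_{[0,1]}$, which lies in $L^{\varphi}[0,\infty)$ since $\varphi<\infty$; a direct computation gives $Cf(t)=1$ on $(0,1]$ and $Cf(t)=1/t$ on $(1,\infty)$, so by (iii) there must exist $\lambda>0$ with $\varphi(\lambda)+\int_{1}^{\infty}\varphi(\lambda/t)\,dt<\infty$, and the substitution $s=t/\lambda$ converts the tail into $\lambda\int_{1/\lambda}^{\infty}\varphi(1/s)\,ds$, delivering (iv) with $x_{0}=1/\lambda$.

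The only real technical nuisance I foresee lies in (i)$\Rightarrow$(ii), namely justifying the finiteness of $\int_{0}^{t}|f|\,ds$ on $[0,\infty)$ (for which the $[0,\infty)$-adaptation of Remark \ref{skonczona-calka} is the needed input) and verifying the $L^{p}$-integrability hypothesis behind Hardy's inequality; both are routine but require a word of attention.
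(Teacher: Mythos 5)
Your proposal is correct and follows essentially the same route as the paper: the authors likewise prove (i)$\Rightarrow$(ii) by Jensen's and Hardy's inequalities exactly as in Proposition \ref{prop: gamma01skonczone} (noting in Remark \ref{skonczona-calka} the finiteness of $\int_0^t|f|$ needed on $[0,\infty)$), handle (ii)$\Rightarrow$(iii) by the same one-line homogeneity argument, and prove (iii)$\Rightarrow$(iv) by testing $C$ on $x_0^{-1}\chi_{[0,x_0)}$ and changing variables, which is your $\chi_{[0,1]}$ computation up to normalization.
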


\begin{proof}
The proof of implication (i)$\Rightarrow $(ii) follows by Jensen inequality
and Hardy inequality and is similar to the proof of the same implication in
Proposition \ref{prop: gamma01skonczone}.

\noindent (ii)$\Rightarrow $(iii). We go analogously as in the respective
proof of Proposition \ref{prop: gamma01skonczone}.

\noindent (iii)$\Rightarrow $(iv). There exists $x_{0}$ such that $\varphi
(x_{0}^{-1})<\infty $. If $f=x_{0}^{-1}\chi _{\lbrack 0,x_{0})}$, then $%
I_{\varphi }(f)<\infty $, i.e. $f\in L^{\varphi }[0,1]$ and 
\begin{equation*}
Cf(t)=\frac{1}{t}\int_{0}^{t}x_{0}^{-1}\chi _{\lbrack
0,x_{0})}(s)ds=x_{0}^{-1}\chi _{\lbrack 0,x_{0})}(t)+\frac{1}{t}\chi
_{\lbrack x_{0},\infty )}(t).
\end{equation*}%
Therefore, by (iii), $Cf\in L^{\varphi }[0,1]$ and 
\begin{equation*}
\rho _{\varphi }(\lambda f)=I_{\varphi }(\lambda Cf)=x_{0}\varphi (\lambda
x_{0}^{-1})+\int_{x_{0}}^{\infty }\varphi \left( \frac{\lambda }{t}\right)
dt<\infty ,
\end{equation*}%
for some $\lambda >0$. After changing variables in the last integral we get 
\begin{equation*}
\lambda \int_{x_{0}/\lambda }^{\infty }\varphi \left( \frac{1}{u}\right)
du<\infty ,
\end{equation*}%
and the proof is finished.
\end{proof}

Note that if $b_{\varphi }<\infty $ and $\varphi (b_{\varphi })=\infty $ we
also apply conditions (\ref{Kl-Kl}) and (\ref{Kl-Kl-2}) to prove the
existence of isometric copy of $l^{\infty }$ in $Ces_{\varphi }.$ Thus we
should discuss similar conditions from Proposition \ref{prop:
gamma01skonczone} and \ref{prop: gamma08skonczone} in that case. Below we
add also the case $b_{\varphi }<\infty $, $\varphi (b_{\varphi })<\infty $.

\begin{proposition}
\label{prop: gamma08skacze} Let $I=[0,\infty )$ and $\varphi $ be an Orlicz
function. Consider the following conditions:

\begin{enumerate}
\item Let $b_{\varphi }<\infty $, $\varphi (b_{\varphi })<\infty $ and there
exists $p>1$, a convex function $\gamma $ and constants $A,B>0$ such that
for all $u\in \lbrack 0,b_{\varphi }]$ 
\begin{equation*}
A\gamma (u)\leq \varphi (u)^{1/p}\leq B\gamma (u).
\end{equation*}

\item Let $b_{\varphi }<\infty $, $\varphi (b_{\varphi })=\infty $ and there
exists $p>1$, a convex function $\gamma $ and constants $A,B>0$ such that
for all $u\in \lbrack 0,b_{\varphi })$ 
\begin{equation*}
A\gamma (u)\leq \varphi (u)^{1/p}\leq B\gamma (u).
\end{equation*}

\item There exists a constant $D>0$ such that 
\begin{equation*}
\rho _{\varphi }(f)\leq DI_{\varphi }(f),
\end{equation*}%
for all $f\in KL^{\varphi }(I)$. In particular, the Orlicz class $%
KL^{\varphi }(I)$ is closed under the operator $C$.

\item $C:L^{\varphi }(I)\rightarrow L^{\varphi }(I)$.

\item there exists $x_{0}\in \lbrack 0,\infty )$ such that $%
\int_{x_{0}}^{\infty }\varphi (t^{-1})dt<\infty $.
\end{enumerate}

\bigskip \noindent We have the following implications: $(\text{i}%
)\Rightarrow (\text{iii})\Rightarrow (\text{iv})\Rightarrow (\text{v})$ and $%
(\text{ii})\Rightarrow (\text{iii})$.
\end{proposition}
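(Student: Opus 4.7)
The plan is to closely follow the template of Proposition \ref{prop: gamma08skonczone}, since the structure of implications is the same, modifying the arguments to account for the finite $b_{\varphi}$ case. I would prove four implications: (i)$\Rightarrow$(iii), (ii)$\Rightarrow$(iii), (iii)$\Rightarrow$(iv), and (iv)$\Rightarrow$(v).

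\textbf{For (i)$\Rightarrow$(iii) and (ii)$\Rightarrow$(iii):} The key preliminary observation is that any $f\in KL^{\varphi}(I)$ with $b_{\varphi}<\infty$ must satisfy $|f|\leq b_{\varphi}$ a.e.\ (with strict inequality a.e.\ in case (ii), since otherwise $\varphi(|f|)=\infty$ on a set of positive measure). Consequently, $C|f|(t)=\frac{1}{t}\int_{0}^{t}|f(s)|\,ds\leq b_{\varphi}$ a.e., and strictly so in case (ii) whenever $|f|<b_{\varphi}$ a.e. Thus both $|f|$ and $C|f|$ take values in the region $[0,b_{\varphi}]$ (resp.\ $[0,b_{\varphi})$) where the equivalence $A\gamma(u)\leq \varphi(u)^{1/p}\leq B\gamma(u)$ is valid. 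After writing $C|f|(t)=\int_{0}^{1}|f(tu)|\,du$ as in the proof of Proposition \ref{prop: gamma01skonczone}, applying the equivalence once, then Jensen's inequality to the convex function $\gamma$, and finally the integral Hardy inequality to $C(\varphi(|f|)^{1/p})$, I arrive at
\begin{equation*}
\rho _{\varphi }(f)\leq \left( \frac{Bp}{A(p-1)}\right) ^{p}I_{\varphi }(f),
\end{equation*}
which gives (iii) with $D=(Bp/(A(p-1)))^{p}$. The closure of $KL^{\varphi }(I)$ under $C$ is then immediate since finite modular implies finite $\rho_{\varphi}$.

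\textbf{For (iii)$\Rightarrow$(iv):} This is the same short argument as in Proposition \ref{prop: gamma08skonczone}: given $x\in L^{\varphi}(I)$, there is $\lambda>0$ with $\lambda x\in KL^{\varphi}(I)$; by (iii) we have $\rho_{\varphi}(\lambda x)=I_{\varphi}(C|\lambda x|)<\infty$, so $\lambda Cx\in L^{\varphi}(I)$, and homogeneity yields $Cx\in L^{\varphi}(I)$.

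\textbf{For (iv)$\Rightarrow$(v):} Choose any $x_{0}>1/b_{\varphi}$ so that $x_{0}^{-1}<b_{\varphi}$ and hence $\varphi(x_{0}^{-1})<\infty$. Set $f=x_{0}^{-1}\chi_{[0,x_{0})}$, which lies in $L^{\varphi}[0,\infty)$ since $I_{\varphi}(f)=x_{0}\varphi(x_{0}^{-1})<\infty$. A direct computation gives $Cf(t)=x_{0}^{-1}\chi_{[0,x_{0})}(t)+\frac{1}{t}\chi_{[x_{0},\infty)}(t)$. By (iv), $Cf\in L^{\varphi}[0,\infty)$, so there exists $\lambda>0$ with $I_{\varphi}(\lambda Cf)<\infty$; in particular $\int_{x_{0}}^{\infty}\varphi(\lambda/t)\,dt<\infty$, and the change of variables $u=t/\lambda$ yields (v) (with the constant $x_{0}/\lambda$ in place of $x_{0}$).

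\textbf{Main obstacle:} The only genuinely delicate point is in case (ii), where the equivalence with $\gamma$ is only assumed on $[0,b_{\varphi})$ and not at the endpoint $b_{\varphi}$. Here I must verify that $C|f|(t)<b_{\varphi}$ a.e.\ for $f\in KL^{\varphi}(I)$, which relies on the strict inequality $|f|<b_{\varphi}$ a.e.\ (otherwise $\varphi(|f|)=\infty$ on a positive-measure set) combined with the strict monotonicity of the Cesàro average from a.e.\ strictly smaller functions. Once this is settled, the Jensen/Hardy chain proceeds exactly as in the proofs of the two preceding propositions.
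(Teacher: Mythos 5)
Your proposal is correct and follows essentially the same route as the paper: the authors' entire proof consists of the observation that any $f\in KL^{\varphi}(I)$ takes values in $[0,b_{\varphi}]$ (respectively $[0,b_{\varphi})$ when $\varphi(b_{\varphi})=\infty$) up to a null set, so that the Jensen/Hardy chain from Propositions \ref{prop: gamma01skonczone} and \ref{prop: gamma08skonczone} applies verbatim, which is exactly your argument. Your explicit verification that $C|f|(t)<b_{\varphi}$ in case (ii) is a detail the paper leaves implicit, and it is handled correctly.
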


\begin{proof}
It is enough to observe that if $f\in KL^{\varphi }$ then $f[\supp%
(f)]\subset \lbrack 0,\varphi (b_{\varphi })]$ if $\varphi (b_{\varphi
})<\infty $ and $f[\supp(f)]\subset \lbrack 0,\varphi (b_{\varphi }))$ if $%
\varphi (b_{\varphi })=\infty $ (up to the set of measure zero). Now we
apply the proofs of Propositions \ref{prop: gamma01skonczone} and \ref{prop:
gamma08skonczone}.
\end{proof}

\begin{proposition}
\label{prop: gamma01skacze} Let $I=[0,1]$ and $\varphi $ be an Orlicz
function. Consider the following conditions:

\begin{enumerate}
\item Let $b_{\varphi }<\infty $, $\varphi (b_{\varphi })<\infty $ and there
exists $p>1$, a convex function $\gamma $ and constants $A,B,\varphi
(u_{0})>0$ such that for all $u\in \lbrack u_{0},b_{\varphi }]$ 
\begin{equation*}
A\gamma (u)\leq \varphi (u)^{1/p}\leq B\gamma (u).
\end{equation*}

\item Let $b_{\varphi }<\infty $, $\varphi (b_{\varphi })=\infty $ and there
exists $p>1$, a convex function $\gamma $ and constants $A,B,\varphi
(u_{0})>0$ such that for all $u\in \lbrack u_{0},b_{\varphi })$ 
\begin{equation*}
A\gamma (u)\leq \varphi (u)^{1/p}\leq B\gamma (u).
\end{equation*}

\item For each $\epsilon \in \left( 0,\varphi (b_{\varphi })\right) $ there
exists a constant $D>0$ such that 
\begin{equation*}
\rho _{\varphi }(f)\leq DI_{\varphi }(f)+\epsilon ,
\end{equation*}%
for all $f\in KL^{\varphi }(I)$ satisfying $\left\vert f\left( t\right)
\right\vert \geq \varphi ^{-1}\left( \epsilon \right) $ for a.e. $t\in 
\limfunc{supp}\left( f\right) .$ Moreover, the Orlicz class $KL^{\varphi
}[0,1]$ is closed under the operator $C.$

\item For each $\epsilon >0$ there exists a constant $D>0$ such that 
\begin{equation*}
\rho _{\varphi }(f)\leq DI_{\varphi }(f)+\epsilon ,
\end{equation*}%
for all $f\in KL^{\varphi }(I)$ satisfying $\left\vert f\left( t\right)
\right\vert \geq \varphi ^{-1}\left( \epsilon \right) $ for a.e. $t\in 
\limfunc{supp}\left( f\right) .$ Moreover, the Orlicz class $KL^{\varphi
}[0,1]$ is closed under the operator $C.$

\item $C:L^{\varphi }(I)\rightarrow L^{\varphi }(I)$.
\end{enumerate}

\bigskip \noindent We have the following implications: $(\text{i}%
)\Rightarrow (\text{iii})\Rightarrow (v)$ and $(\text{ii})\Rightarrow (\text{%
iv})\Rightarrow (\text{v})$.
\end{proposition}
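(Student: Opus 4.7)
The plan is to mimic the arguments of Propositions \ref{prop: gamma01skonczone} and \ref{prop: gamma08skonczone}, making essential use of the fact that $b_{\varphi}<\infty$ forces $|f|\leq b_{\varphi}$ a.e.\ on $\supp(f)$ for every $f\in KL^{\varphi}$ (with strict inequality in case (ii)). The implications split naturally into two groups: (i)$\Rightarrow$(iii) and (ii)$\Rightarrow$(iv) rest on a Hardy--Jensen argument, while (iii)$\Rightarrow$(v) and (iv)$\Rightarrow$(v) follow by exactly the reasoning used at the end of Proposition \ref{prop: gamma01skonczone}.

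For (i)$\Rightarrow$(iii) (and analogously (ii)$\Rightarrow$(iv)) I would fix $\epsilon$ in the admissible range and set $u_{1}=\varphi^{-1}(\epsilon)$, noting that $u_{1}<b_{\varphi}$. When $u_{1}<u_{0}$, extend $\gamma$ leftward to a convex continuous function $\gamma_{e}$ on $[u_{1},b_{\varphi}]$ (resp.\ $[u_{1},b_{\varphi})$) exactly as in Proposition \ref{prop: gamma01skonczone}: pick $u_{2}$ with $\gamma_{+}'(u_{2})\geq 0$ and $\gamma(u_{2})=\gamma(u_{0})$, then set $\gamma_{e}$ equal to $\gamma(u_{0})$ on $[u_{1},u_{2}]$ and equal to $\gamma$ beyond. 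The existence of $u_{2}$ is guaranteed by convexity, continuity, and the growth of $\gamma$ near $b_{\varphi}$. Adjusting the constants one obtains $A'\gamma_{e}(u)\leq \varphi(u)^{1/p}\leq B'\gamma_{e}(u)$ on the enlarged interval. Now, for $f\in KL^{\varphi}$ with $|f|\geq u_{1}$ a.e.\ on $\supp(f)$, partition $[0,1]$ into $A_{0}=\{t:C|f|(t)\geq u_{1}\}$ and its complement $B_{0}$. The integral over $B_{0}$ is at most $\varphi(u_{1})=\epsilon$; on $A_{0}$, apply the equivalence $\varphi\leq (B')^{p}\gamma_{e}^{p}$, then Jensen's inequality to pull $\gamma_{e}$ inside the averaging (over the set $C_{0}^{t}=\{u:|f(tu)|\geq u_{1}\}$, exactly as in Proposition \ref{prop: gamma01skonczone}), and finally Hardy's inequality with exponent $p>1$. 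The output is $\rho_{\varphi}(f)\leq D\, I_{\varphi}(f)+\epsilon$ with $D=(B'p/(A'(p-1)))^{p}$.

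To verify the closure of $KL^{\varphi}$ under $C$, I would reuse the truncation trick from Proposition \ref{prop: gamma01skonczone}: for $f\in KL^{\varphi}$ set $\widetilde{f}=|f|\chi_{I_{1}}+u_{0}\chi_{I_{2}}$ with $I_{1}=\{|f|\geq u_{0}\}$ and $I_{2}=\{|f|<u_{0}\}$. Then $u_{0}\leq \widetilde{f}\leq b_{\varphi}$ a.e., the unextended $\gamma$ applies directly, and the same Hardy--Jensen chain gives $\rho_{\varphi}(f)\leq \rho_{\varphi}(\widetilde{f})\leq (Bp/(A(p-1)))^{p}(I_{\varphi}(f)+\varphi(u_{0}))<\infty$. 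In case (i) one may alternatively note the trivial bound $I_{\varphi}(C|f|)\leq \varphi(b_{\varphi})<\infty$ since $C|f|\leq b_{\varphi}$ a.e.\ and $\mu(I)=1$. The implications (iii)$\Rightarrow$(v) and (iv)$\Rightarrow$(v) then copy the last step of Proposition \ref{prop: gamma01skonczone}: for $x\in L^{\varphi}$ pick $\lambda>0$ with $\lambda x\in KL^{\varphi}$, invoke closure to get $C(\lambda x)\in KL^{\varphi}\subset L^{\varphi}$, and use homogeneity of $C$ to conclude $Cx\in L^{\varphi}$.

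The principal obstacle is managing the convex extension $\gamma_{e}$ inside the bounded interval $[u_{1},b_{\varphi}]$ (or $[u_{1},b_{\varphi})$) while preserving both convexity and the two-sided equivalence with $\varphi^{1/p}$, and ensuring that Jensen's inequality goes through cleanly on averaging sets $C_{0}^{t}$ of measure possibly less than one. Both points were already settled in Proposition \ref{prop: gamma01skonczone}, and their treatment transplants without further modification to the bounded-range situation considered here.
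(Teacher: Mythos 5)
Your proposal is correct and follows essentially the same route as the paper, whose proof of this proposition consists only of the remark that it is similar to Propositions \ref{prop: gamma01skonczone} and \ref{prop: gamma08skacze}; you have simply carried out that adaptation explicitly (extension of $\gamma$, Hardy--Jensen chain on $A_{0}$ and $C_{0}^{t}$, truncation by $u_{0}$ for the closure of $KL^{\varphi}$ under $C$, and the homogeneity argument for (iii)$\Rightarrow$(v), (iv)$\Rightarrow$(v)). One small repair: in case (i) your appeal to the ``growth of $\gamma$ near $b_{\varphi}$'' for the existence of $u_{2}$ does not apply, since $\varphi(b_{\varphi})<\infty$ permits $\gamma$ to be decreasing on all of $[u_{0},b_{\varphi}]$; but there the extension is unnecessary anyway, because $\varphi^{1/p}$ is bounded between two positive constants on the compact interval $[u_{1},b_{\varphi}]$ and hence is equivalent there to a constant, which is convex.
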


\begin{proof}
The proof is similar to the proof of Proposition \ref{prop: gamma01skonczone}
and \ref{prop: gamma08skacze}.
\end{proof}

\begin{remark}
\label{rem: implikacje}
\end{remark}

\begin{enumerate}
\item Condition (iv) in Proposition \ref{prop: gamma08skonczone} and
condition (v) in Proposition \ref{prop: gamma08skacze} is equivalent to $%
Ces_{\varphi }[0,\infty )\neq \{0\}$. This follows from Theorem 1 (a) in 
\cite{LM15a}, see also Proposition 3 in \cite{KK16}.

\item Recall that the condition (ii) from Propositions \ref{prop:
gamma01skonczone}, \ref{prop: gamma08skonczone} and condition (iii) from
Propositions \ref{prop: gamma08skacze}, \ref{prop: gamma01skacze} have been
applied to prove the existence of order isomorphically isometric copy of $%
l^{\infty }$ in $Ces_{\varphi }[0,1]$ - see Theorems \ref{th: kopia01}, \ref%
{th: kopia08}. \newline
Notice also that condition $C:L^{\varphi }\rightarrow L^{\varphi }$ has been
used in \cite{KK16} to prove the criteria for order continuity of $%
Ces_{\varphi }$ (equivalently for the existence of isomorphic copy of $%
l^{\infty }$ in $Ces_{\varphi }$, see Theorem A).
\end{enumerate}

\bigskip A function $f : \mathbb{R}_{+} \rightarrow \mathbb{R}_{+}$ is said
to be pseudo-increasing for all arguments (for small argument or large
arguments) whenever there exist constant $M > 0$, $u_0 \ge 0$ with $f(u) \le
Mf(v)$ for all $0 \le u < v$ ($0 \le u < v \le u_0$ or $u_0 \le u < v$,
respectively). The following useful characterization is well known.

\begin{lemma}
Assume $\varphi$ is an Orlicz function and $p>1$. Function $%
\varphi(u)^{1/p}/u$ is pseudo-increasing (for small arguments, large
arguments or all arguments) if and only if $\varphi^{1/p}$ is equivalent to
convex function $\gamma$ (for small arguments, large arguments or all
arguments respectively).
\end{lemma}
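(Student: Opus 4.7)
The plan is to prove the two implications separately, for each of the three regimes (small / large / all arguments) in parallel, since the argument is structurally the same. Set $h(u):=\varphi(u)^{1/p}/u$ throughout.

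For the direction ``equivalent to convex $\Rightarrow$ pseudo-increasing'': assume $A\gamma(u)\le\varphi(u)^{1/p}\le B\gamma(u)$ with $\gamma$ convex on the relevant interval. In the cases of small arguments and all arguments we automatically have $\gamma(0)=0$, forced by $\varphi(0)=0$ and $A\gamma(0)\le 0$. In the ``large arguments'' case one has to first extend $\gamma$ below $u_{0}$ by a linear-then-constant piece so as to preserve convexity and non-negativity and so that $\gamma(0)=0$; this is exactly the trick already carried out in the proof of Proposition~\ref{prop: gamma01skonczone}. Once $\gamma(0)=0$, convexity gives $\gamma(tu)\le t\gamma(u)$ for $t\in[0,1]$, and therefore $\gamma(u)/u$ is non-decreasing. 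The equivalence then transfers to $h$: for $u\le v$ in the relevant range,
\[
h(u)=\frac{\varphi(u)^{1/p}}{u}\le \frac{B\gamma(u)}{u}\le \frac{B\gamma(v)}{v}\le \frac{B}{A}\cdot\frac{\varphi(v)^{1/p}}{v}=\frac{B}{A}\,h(v),
\]
so $h$ is pseudo-increasing with constant $M=B/A$.

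For the reverse direction, assume $h$ is pseudo-increasing with constant $M$, and pass to the non-decreasing envelope $H(u):=\sup\{h(t):0<t\le u\}$. Then $h(u)\le H(u)\le Mh(u)$ on the relevant interval (the upper bound is precisely the pseudo-increasing inequality). Define
\[
\gamma(u):=\int_{0}^{u}H(t)\,dt,
\]
which is convex because its derivative $H$ is non-decreasing. The upper bound $\gamma(u)\le uH(u)\le Muh(u)=M\varphi(u)^{1/p}$ is immediate, and a matching lower bound comes from
\[
\gamma(u)\ge \int_{u/2}^{u}H(t)\,dt\ge \tfrac{u}{2}H(u/2)\ge \tfrac{1}{2}\varphi(u/2)^{1/p},
\]
i.e.\ $\gamma$ and $\varphi^{1/p}$ are equivalent in the Orlicz-function sense (possibly with an inner dilation of the variable). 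For the ``small'' and ``large'' cases one restricts the sup and the integral to the appropriate subinterval and then, if necessary, extends linearly to obtain a globally convex $\gamma$, exactly as in the extension argument of Proposition~\ref{prop: gamma01skonczone}.

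The main obstacle is the lower bound in the reverse direction: the envelope construction only directly compares $\gamma(u)$ with $\varphi(u/2)^{1/p}$ rather than with $\varphi(u)^{1/p}$, so one has to be careful about which notion of ``equivalent for small/large/all arguments'' is being used. Interpreting equivalence in the standard Orlicz sense (allowing inner rescaling), the estimate above is enough; if one insists on pointwise multiplicative equivalence, one instead uses the pseudo-increasing inequality $h(u)\le Mh(v)$ applied to $v=2u$ to convert the bound $\gamma(2u)\ge \varphi(u)^{1/p}$ into a bound on $\gamma(u)$ through the relation $\varphi(2u)^{1/p}\ge (2/M)\varphi(u)^{1/p}$, closing the loop.
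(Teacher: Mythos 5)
Your first implication (``equivalent to a convex function $\Rightarrow$ pseudo-increasing'') is correct and is essentially verbatim the paper's proof of the $(\Leftarrow)$ direction: one notes that $\gamma(u)/u$ is non-decreasing for a convex $\gamma$ with $\gamma(0)=0$ and transfers the monotonicity through the two-sided bounds, getting the constant $M=B/A$. Your extra care about forcing $\gamma(0)=0$ (and extending $\gamma$ in the large-argument case) is a reasonable refinement of what the paper leaves implicit.

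The other implication is where there is a genuine gap. The paper does not prove it at all --- it cites \cite[Theorem 1.6]{KMP00} --- whereas you attempt an explicit construction, and the construction as written does not deliver the statement in the form the paper uses it. Your envelope argument gives $\varphi(u/2)^{1/p}\le\gamma(u)\le M\varphi(u)^{1/p}$, i.e.\ equivalence only up to an inner dilation of the variable. But throughout the paper (see condition (i) of Propositions \ref{prop: gamma01skonczone} and \ref{prop: gamma08skonczone}) ``equivalent'' means the pointwise two-sided bound $A\gamma(u)\le\varphi(u)^{1/p}\le B\gamma(u)$ with no dilation, and that is the form needed for the Hardy--Jensen estimates. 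Your proposed repair does not close this gap: the pseudo-increasing inequality applied to $u<2u$ gives $\varphi(2u)^{1/p}\ge(2/M)\varphi(u)^{1/p}$, which bounds $\varphi(2u)$ \emph{from below} by $\varphi(u)$; to convert $\gamma(u)\ge\varphi(u/2)^{1/p}$ into $\gamma(u)\ge c\,\varphi(u)^{1/p}$ you would need the reverse estimate $\varphi(u)\le C\varphi(u/2)$, i.e.\ a $\Delta_2$-type doubling condition --- which is exactly what is \emph{not} assumed here (the lemma is applied precisely when $\varphi\notin\Delta_2$, e.g.\ $\varphi(u)=e^u-1$, for which $\varphi(u)^{1/p}/\varphi(u/2)^{1/p}\to\infty$, so your lower bound $\gamma(u)\ge\varphi(u/2)^{1/p}$ really is too weak pointwise). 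So either you must genuinely upgrade the construction (the pointwise statement requires a different choice of $\gamma$, not the integral of the running supremum of $\varphi(t)^{1/p}/t$), or you should do what the paper does and invoke \cite[Theorem 1.6]{KMP00} for this direction.
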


\begin{proof}
$(\Rightarrow)$. See \cite[Theorem 1.6]{KMP00}.

\noindent $(\Leftarrow)$. We were not able to find that simple proof, so we
present it for the reader's convenience. Note that if $\gamma$ is convex
function then for all $t < s$ 
\begin{equation*}
\gamma (s)=\gamma \left(\frac{s}{t} t \right) \ge \frac{s}{t} \gamma(t),
\end{equation*}
therefore 
\begin{equation*}
\frac{\gamma(t)}{t} \le \frac{\gamma(s)}{s}.
\end{equation*}
Suppose $\varphi^{1/p}$ is equivalent to convex function $\gamma$ for all
arguments, i.e. there exists constants $A,B>0$ such that for all $u\in
[0,\infty)$ 
\begin{equation*}
A\frac{\gamma(u)}{u} \le \frac{\varphi(u)^{1/p}}{u} \le B \frac{\gamma(u)}{u}%
.
\end{equation*}
For $u<v$ we have 
\begin{equation*}
\frac{\varphi(u)^{1/p}}{u} \le B \frac{\gamma(u)}{u} \le B \frac{\gamma(v)}{v%
} \le \frac{B}{A} \frac{\varphi(v)^{1/p}}{v}.
\end{equation*}
\end{proof}

Applying Theorem \ref{th: kopia01} and \ref{th: kopia08}, Proposition \ref%
{prop: gamma01skonczone}, \ref{prop: gamma08skonczone}, \ref{prop:
gamma08skacze} and \ref{prop: gamma01skacze} we get following corollary.

\begin{corollary}
Suppose in the case when $\varphi (b_{\varphi })=\infty $ that there is $p>1$
such that the function $\varphi ^{1/p}$ is equivalent to a convex function
for large arguments if $I=[0,1]$ or for all arguments if $I=[0,\infty )$ (as
we mean in the Proposition \ref{prop: gamma01skonczone}, \ref{prop:
gamma08skonczone}, \ref{prop: gamma08skacze} and \ref{prop: gamma01skacze}).
If $\varphi \notin \Delta _{2}$ then the corresponding Cesàro function space 
$Ces_{\varphi }(I)$ contains an order isomorphically isometric copy of $%
l^{\infty }$.
\end{corollary}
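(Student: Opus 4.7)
The strategy is to apply Theorem \ref{th: kopia01} (for $I=[0,1]$) or Theorem \ref{th: kopia08} (for $I=[0,\infty)$) and, whenever the hypothesis $\varphi(b_{\varphi})=\infty$ forces us to verify the technical inequalities (\ref{Kl-Kl}) or (\ref{Kl-Kl-2}), invoke the appropriate implication from Propositions \ref{prop: gamma01skonczone}, \ref{prop: gamma08skonczone}, \ref{prop: gamma08skacze}, or \ref{prop: gamma01skacze} to convert the ``$\varphi^{1/p}$ equivalent to a convex function'' hypothesis into that inequality. The proof is essentially a case-distinction on the pair $(I,\varphi(b_{\varphi}))$.

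First, consider $I=[0,1]$ with $\varphi\notin\Delta_{2}(\infty)$. If $\varphi(b_{\varphi})<\infty$, then Theorem \ref{th: kopia01}$(i)$ places no technical assumption on $\varphi$ and we are done immediately. If $\varphi(b_{\varphi})=\infty$, we split further according to whether $b_{\varphi}=\infty$ or $b_{\varphi}<\infty$. In the first subcase the hypothesis gives equivalence of $\varphi^{1/p}$ to a convex function for large arguments, so Proposition \ref{prop: gamma01skonczone} (i)$\Rightarrow$(ii) yields exactly the inequality (\ref{Kl-Kl}) required by Theorem \ref{th: kopia01}$(i)$. In the second subcase the hypothesis gives the equivalence on $[u_{0},b_{\varphi})$, and Proposition \ref{prop: gamma01skacze} (ii)$\Rightarrow$(iv) produces (\ref{Kl-Kl}). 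In every subcase Theorem \ref{th: kopia01}$(i)$ then supplies the desired isometric copy of $l^{\infty}$.

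Next, consider $I=[0,\infty)$ with $\varphi\notin\Delta_{2}(\mathbb{R}_{+})$. Again, when $\varphi(b_{\varphi})<\infty$, Theorem \ref{th: kopia08}$(i)$ imposes no extra condition, so it applies directly. When $\varphi(b_{\varphi})=\infty$ with $b_{\varphi}=\infty$, the hypothesis gives equivalence of $\varphi^{1/p}$ to a convex function for all arguments, and Proposition \ref{prop: gamma08skonczone} (i)$\Rightarrow$(ii) delivers the uniform bound (\ref{Kl-Kl-2}). When $\varphi(b_{\varphi})=\infty$ with $b_{\varphi}<\infty$, Proposition \ref{prop: gamma08skacze} (ii)$\Rightarrow$(iii) supplies (\ref{Kl-Kl-2}). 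Theorem \ref{th: kopia08}$(i)$ then concludes the argument.

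No genuine obstacle arises: the content of the corollary is the observation that the ``$\varphi^{1/p}$ equivalent to a convex function'' hypothesis, which is the cleanest sufficient condition one can state purely in terms of $\varphi$, feeds into the modular inequalities via the propositions and then unlocks the main Theorems \ref{th: kopia01} and \ref{th: kopia08}. The only thing one must be careful about is matching the correct proposition to each of the four subcases above according to whether we work on $[0,1]$ or $[0,\infty)$ and whether $b_{\varphi}$ is finite; once this bookkeeping is done the conclusion is immediate.
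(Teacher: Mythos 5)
Your proposal is correct and follows exactly the route the paper intends: the paper states the corollary as an immediate consequence of Theorems \ref{th: kopia01} and \ref{th: kopia08} combined with Propositions \ref{prop: gamma01skonczone}, \ref{prop: gamma08skonczone}, \ref{prop: gamma08skacze} and \ref{prop: gamma01skacze}, and your case-by-case matching of the convexity hypothesis to the modular inequalities (\ref{Kl-Kl}) and (\ref{Kl-Kl-2}) is precisely the intended bookkeeping.
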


\begin{example}
Let $\varphi (u)=e^{u}-1$ for $u\geq 0$. Then $\varphi \notin \Delta
_{2}(\infty )$ and for all $p>1$ the function $\psi =\varphi ^{1/p}$ is
equivalent to a convex function for large arguments. In fact, 
\begin{equation*}
\limsup\limits_{u\rightarrow \infty }\frac{e^{2u}-1}{e^{u}-1}=\infty ,
\end{equation*}%
and 
\begin{equation*}
\psi ^{\prime \prime }(u)=\frac{1}{p^{2}}e^{u}\left( e^{u}-1\right) ^{\frac{1%
}{p}-2}\left( e^{u}-p\right) >0
\end{equation*}%
for all $p>1$ and $u>\ln p$.
\end{example}

\begin{example}
It may happen that $\varphi\notin\Delta_2(\infty)$ and for any $p>1$
function $\varphi^{1/p}$ is not equivalent to any convex function for large
arguments. Indeed, take an Orlicz function $\varphi$ with $%
\varphi\notin\Delta_2(\infty)$ and $\varphi^{\ast}\notin\Delta_2(\infty)$
(see for example \cite[p. 28]{KR61}). Since $\varphi^{\ast}\notin\Delta_2(%
\infty)$ so $\alpha_\varphi = 1$ (see \cite{Ma85}). Consequently, $C$ is not
bounded. Now we apply Proposition \ref{prop: gamma01skonczone}.
\end{example}

\begin{center}
ACKNOWLEDGEMENTS
\end{center}

The second author (Pawe\l {} Kolwicz) is supported by the Ministry of
Science and Higher Education of Poland, grant number 04/43/DSPB/0089.

\end{document}